\newcommand{\beq}{\begin{equation}}
\newcommand{\beqn}{\begin{equation*}}
\newcommand{\eeq}{\end{equation}}
\newcommand{\eeqn}{\end{equation*}}
\newcommand{\n}{\mathbb{N}}
\newcommand{\R}{\mathbb{R}}
\newcommand{\rk}{{\rm rk}}
\newcommand{\sone}{S^1}
\newcommand{\z}{\mathbb{Z}}
\newcommand{\Z}{\mathbb{Z}}
\newcommand{\Q}{\mathbb{Q}}
\newtheorem{theorem}{Theorem}
\newtheorem{lemma}{Lemma}
\newtheorem{definition}{Definition}
\newtheorem{proposition}{Proposition}
\newtheorem{remark}{Remark}
\newtheorem{example}{Example}
\title{Closed geodesics on connected sums and $3$-manifolds}
\author{Hans-Bert~Rademacher}
\address{Mathematisches Institut, Universit{\"a}t Leipzig,
04081 Leipzig, Germany}
\email{rademacher@math.uni-leipzig.de}
\author{Iskander~A.~Taimanov}
\address{Sobolev Institute of Mathematics,
630090 Novosibirsk, and Novosibirsk State University,
630090 Novosibirsk, Russia}
\email{taimanov@math.nsc.ru}
\begin{document}
%\baselineskip 18pt
%%%%%%%%%%%%%%%%%%%%%%%%%%%%%%%%%%%%%%%%%%%%%%%%%%%%%%%%%%%%
\begin{abstract}
We study the asymptotics of the number $N(t)$
of geometrically
distinct closed geodesics
of a Riemannian or Finsler metric
on a connected sum of
two compact manifolds of dimension at least three
with non-trivial fundamental groups
and apply this result to the prime decomposition of
a three-manifold.
In particular we show that the function $N(t)$
grows at least like the prime numbers on
a compact $3$-manifold with infinite fundamental group.
It follows that a generic Riemannian metric on a
compact $3$-manifold has infinitely many
geometrically distinct closed geodesics.
We also consider the case of a connected sum of a
compact manifold with positive first Betti number and
a simply-connected manifold which is not homeomorphic to
a sphere.
\end{abstract}
\keywords{closed geodesic, fundamental group, free product of groups,
conjugacy classes, connected sum of manifolds, exponential growth}
\subjclass[2010]{53C22, 20E06, 20E45, 58E10}
%%%%%%%%%%%%%%%%%%%%%%%%%%%%%%%%%%%%%%%%%%%%%%%%%%%%%%%%%%%
\maketitle
%%%%%%%%%%%%%%%%%%%%%%%%%%%%%%%%%%%%%%%%%%%%%%%%%%%%%%%%%%
\section{Introduction}
For a compact manifold
with infinite fundamental group
endowed with a Riemannian
or Finsler metric we investigate the
asymptotics of the number $N(t)$
of geometrically distinct closed geodesics
with length $\le t.$
In any non-trival free
homotopy class there exists a non-trivial closed geodesic.
Therefore not the growth of the fundamental group but
the growth of the number of conjugacy classes allows
to estimate of the function $N(t).$
If the first Betti number $b_1(M)=\rk_{\z} H_1(M;\z)$
satisfies
$k=b_1(M)\ge 2$ then one obtains
$\liminf_{t \to \infty} N(t)/t^k>0,$
in particular there are infinitely many closed geodesics.
Estimates for the function $N(t)$ in case of an
infinite cyclic fundamental group are due
to Bangert and Hingston~\cite[Thm.]{BH},
for an infinite solvable fundamental group
due to~\cite[Thm.3]{T1993}, for products
of $S^1$ with a simply-connected manifold due
to Gromov~\cite[p.398]{Gr} and
for an almost nilpotent,
but not infinitely cyclic fundamental
group due to Ballmann~\cite[Satz 2]{Ball}.
Related results were also obtained by Tanaka~\cite{T}.
Ballmann, Thorbergsson and Ziller~\cite{BTZ1981}
present results for fundamental groups in which
there is a non-trivial element for
which two different powers
are conjugate to each other.

In Section~\ref{sec:normal}
we study the growth function
$F(k)$ of the number of conjugacy
classes in $G$  which can be
represented by words of length $\le k$
in some generating set
for a free product $G=G_1\ast G_2$
of non-trivial groups $G_1$ and $G_2.$
We apply this result in Section~\ref{sec:conn}
to the fundamental group
$\pi_1\left(M_1\#M_2\right)\cong\pi_1(M_1)\ast \pi_1(M_2)$
of the connected sum
$M_1 \# M_2$ of
compact manifolds $M_1$ and $M_2$ of dimension
$\ge 3.$ Section~\ref{sec:conn} and
Section~\ref{sec:connA} give the proof of
%%%%%%%%%%%%%%%%%%%%%%%%%%%%%%%%%%%%%%%%%%%%%%%%%%%%%%%%%%%%%%%%%%
%%%%%%%%%%%%%%%%%%%%%%%%%%%%%%%%%%%%%%%%%%%%%%%%%%%%%%%%%%%%%%%%%
\begin{theorem}
\label{thm:conn-intro}
Let $M_1$ and $M_2$ be two compact manifolds of dimension
$n \ge 3$ and let $\pi_1(M_1)\not=0$
and let $M_2$ be not homeomorphic to the $n$-sphere.
Then the number of
geometrically distinct
closed geodesics $N(t)$ of length
$\le t$ of a Riemannian or Finsler metric on the
connected sum $M=M_1 \# M_2$ satisfies the following:
\begin{itemize}
\item[(a)] If $\pi_1(M_2)\not=0$ or
if the first Betti number $b_1(M_1)=\rk_{\z}H_1(M_1;\z)\ge 1$
then
$\liminf_{t\to \infty}N(t)\log(t)/t>0.$
\item[(b)] If $\pi_1(M_2)\not=0$
and $\pi_1(M_2)\not\cong\z_2$ then
$\liminf_{t\to \infty} \log (N(t))/t>0.$
\end{itemize}
In particular in both cases there are infinitely
many geometrically distinct closed geodesics.
Moreover these inequalities hold if only non-contractible closed geodesics are counted.
\end{theorem}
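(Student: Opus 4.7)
My plan is to reduce $N(t)$ to a count of primitive conjugacy classes in $\pi_1(M)$ and then invoke the growth estimates of Section~\ref{sec:normal}. Every nontrivial free homotopy class of loops on a compact Riemannian or Finsler manifold contains a closed geodesic of minimal length in that class, and free homotopy classes biject with conjugacy classes in $\pi_1(M)$. Since $n\ge 3$, van Kampen gives $\pi_1(M)\cong\pi_1(M_1)\ast\pi_1(M_2)$. Fix a finite symmetric generating set of $\pi_1(M)$ and represent each generator by a loop; by compactness there is $L>0$ such that any element of word length $\le k$ is realized by a loop of length $\le Lk$. So a lower bound on the number of primitive conjugacy classes of word length $\le k$ yields a lower bound on $N(Lk)$, with the only caveat that a primitive class $[w]$ and its inverse $[w^{-1}]$ may trace the same geometric geodesic, a factor of $2$ that does not affect asymptotics.

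For case (b), the assumption $\pi_1(M_2)\not=0$ and $\pi_1(M_2)\not\cong\z_2$ places us in the exponential regime of Section~\ref{sec:normal}, so the count $F(k)$ of conjugacy classes representable by words of length $\le k$ satisfies $F(k)\ge e^{ck}$ for some $c>0$. Proper powers of primitives of length $\le k$ contribute at most $\sum_{m\ge 2}F(k/m)$, which is of strictly lower exponential order, so the primitive count is itself $\gtrsim e^{ck}$, yielding $\liminf\log N(t)/t>0$. For case (a) under $\pi_1(M_2)\not=0$, Section~\ref{sec:normal} provides at least linear growth $F(k)\ge C k$; a standard M\"obius-inversion argument then extracts from this a bound of order $k/\log k$ on the number of primitive conjugacy classes of word length $\le k$, and translation via $L$ yields $N(t)\gtrsim t/\log t$. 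All geodesics constructed represent nontrivial conjugacy classes, so the inequalities persist if one counts only non-contractible geodesics.

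The remaining subcase of (a), namely $\pi_1(M_2)=0$ together with $b_1(M_1)\ge 1$ and $M_2\not\cong S^n$, is treated separately in Section~\ref{sec:connA}: here $\pi_1(M)=\pi_1(M_1)$ so the free product contributes nothing new, and one must instead exploit the nontrivial higher topology of a non-sphere simply-connected $M_2$ to enrich the free loop space of $M$, combined with the infinite cyclic action arising from $b_1\ge 1$, in the spirit of Bangert--Hingston. The step I expect to be the main obstacle is the group-theoretic passage in case (a) from the linear bound $F(k)\ge Ck$ to the primitive count of order $k/\log k$: proper powers of short primitives accumulate on the same order as $F$ itself, so the $\log$ denominator emerges only after careful accounting, and this is precisely the origin of the prime-number-like $t/\log t$ rate.
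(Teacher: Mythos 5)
Your reduction of $N(t)$ to counting primitive conjugacy classes, and your treatment of case (b), are essentially sound and close to the paper's argument: the paper constructs the explicit cyclically reduced words $ab_{m_1}\cdots ab_{m_r}$, gets $2^r/r$ classes of controlled length from Lemma~\ref{lem:permutation}, and disposes of the multiplicity issue by bounding the covering multiplicity by $3rL/L_1$ rather than by restricting to primitives, but the two bookkeeping devices are interchangeable here. Likewise your deferral of the subcase $\pi_1(M_2)=0$, $b_1(M_1)\ge 1$ to Section~\ref{sec:connA} is appropriate.

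The genuine gap is in case (a) when $\pi_1(M_2)\not=0$: after case (b) is settled, the only remaining subcase is $\pi_1(M_1)\cong\pi_1(M_2)\cong\z_2$, and there your M\"obius-inversion step is not merely delicate --- it is false. In $\z_2\ast\z_2=\langle a,b\mid a^2=b^2=1\rangle$ the nontrivial conjugacy classes are exactly $[a]$, $[b]$ and $[(ab)^r]$, $r\ge 1$ (every odd-length alternating word is conjugate to $a$ or $b$, every even-length one to a power of $ab$, and $[ba]=[ab]^{-1}=[ab]$). So $F(k)$ grows linearly while there are only \emph{three} nontrivial primitive conjugacy classes; linear growth of $F$ is perfectly compatible with a bounded primitive count, since $F(k)=\sum_{m\ge 1}P(\lfloor k/m\rfloor)$ is $O(k)$ whenever $P$ is bounded. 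No counting of free homotopy classes can give $t/\log t$ here: on this manifold all the classes $[(ab)^r]$ may be realized by iterates of a single prime closed geodesic. The paper instead observes that $\langle ab\rangle\cong\z$ is a normal subgroup of index two, passes to the corresponding double cover $\widetilde M$ with $\pi_1(\widetilde M)\cong\z$, and invokes the Bangert--Hingston theorem~\cite{BH}, a Morse-theoretic result on the free loop space which extracts $t/\log t$ many geometrically distinct geodesics from the iterates within a single infinite cyclic family. That external input (also the engine behind Section~\ref{sec:connA}) cannot be replaced by the group-theoretic accounting you propose, so this subcase of (a) needs a different argument.
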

%%%%%%%%%%%%%%%%%%%%%%%%%%%%%%%%%%%%%%%%%%%%%%%%%%%%%%%%%%%%%%%%%%%
%%%%%%%%%%%%%%%%%%%%%%%%%%%%%%%%%%%%%%%%%%%%%%%%%%%%%%%%%%%%%%%%%%%
We remark (see Section~\ref{sec:remark}) that together with known results Theorem~\ref{thm:conn-intro} implies that the only remaining case for which the existence of infinitely many geometrically distinct closed geodesics on connected sums is not established is as follows: {\sl one of of manifolds, say $M_2$, is simply-connected and $b_1(M_1) =0$ with $\pi_1(M_1)$ infinite}.

Note that Paternain and  Petean proved in \cite{PP} that
for a bumpy Riemannian metric on a
connected sum $M=M_1 \# M_2$
the number $N^0(t)$ of {\em contractible}
closed geodesics of length $\le t$
has exponential growth under the following
assumption: $\pi_1(M_1)$ has a subgroup of
finite index $\ge 3$ and $M_2$ is simply-connected and
not a homotopy sphere.

Theorem~\ref{thm:conn-intro}
is applied in Section~\ref{sec:three} to the
prime decomposition of a three-dimensional manifold:
%%%%%%%%%%%%%%%%%%%%%%%%%%%%%%%%%%%%%%%%%%%%%%%%%%%%%%%%%%%%%%%%%%%
%%%%%%%%%%%%%%%%%%%%%%%%%%%%%%%%%%%%%%%%%%%%%%%%%%%%%%%%%%%%%%%%%%%
\begin{theorem}
\label{thm:three-dimension}
Let $M$ be a compact $3$-manifold.
If the fundamental group $\pi_1(M)$ is infinite
then for any Riemannian or Finsler metric the
number $N(t)$ of geometrically distinct closed geodesics
of length $<t$ satisfies:
$\liminf_{t\to \infty} N(t)\log(t)/t>0.$
In particular there are infinitely many
geometrically distinct closed geodesics.
\end{theorem}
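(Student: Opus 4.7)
The plan is to apply Theorem~\ref{thm:conn-intro} whenever $M$ is not prime and to handle the remaining prime cases using the structure theory of $3$-manifolds. I would start from the Kneser-Milnor prime decomposition $M = M_1 \# M_2 \# \cdots \# M_k$, where each $M_i$ is a closed prime $3$-manifold not homeomorphic to $S^3$; by the Poincar\'e conjecture each $\pi_1(M_i)$ is non-trivial, and $\pi_1(M) \cong \pi_1(M_1) \ast \cdots \ast \pi_1(M_k)$. If $k \geq 2$, I would regroup $M = M_1 \# M'$ with $M' = M_2 \# \cdots \# M_k$: both factors have non-trivial $\pi_1$ and neither is homeomorphic to $S^3$, so Theorem~\ref{thm:conn-intro}(a) applies directly and yields $\liminf_{t \to \infty} N(t)\log(t)/t > 0$.

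If $k = 1$ and $M$ is not irreducible, then $M$ is $S^2 \times S^1$ or its non-orientable twisted $S^2$-bundle over $S^1$; in both cases $\pi_1(M) \cong \Z$ and the Bangert-Hingston theorem \cite{BH} quoted in the introduction suffices. If $k = 1$ and $M$ is irreducible with infinite $\pi_1$, then $\pi_2(M) = 0$ by the sphere theorem, and Hurewicz together with the non-compactness of the universal cover forces $\pi_i(M) = 0$ for all $i \geq 2$; hence $M$ is aspherical. When $b_1(M) \geq 2$, the polynomial estimate $\liminf N(t)/t^{b_1} > 0$ quoted in the introduction already implies $\liminf N(t)\log(t)/t > 0$. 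When $b_1(M) \leq 1$, I would pass to a finite cover $\widetilde M \to M$ of degree $d$ with $b_1(\widetilde M) \geq 2$; such a cover exists for every closed aspherical $3$-manifold with infinite $\pi_1$ by combining Perelman's geometrization theorem with Agol's theorem on virtual positive first Betti number. A routine counting argument then gives $N_M(t) \geq N_{\widetilde M}(t)/d$, since each primitive closed geodesic of $M$ has at most $d$ primitive preimages in $\widetilde M$, so the estimate transfers from $\widetilde M$ down to $M$.

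The hard part is the last step: producing a finite cover with $b_1 \geq 2$ for an irreducible $3$-manifold with small first Betti number, for instance a closed hyperbolic $3$-manifold with $b_1 = 0$. This depends on deep results of Perelman, Agol, and Wise. A more elementary alternative is a case analysis over the eight Thurston geometries: word-hyperbolic fundamental groups produce even exponential conjugacy-class growth; the almost nilpotent geometries (flat and Nil) are covered by Ballmann's estimate \cite{Ball}; the solvable Sol case follows from \cite{T1993}; and Seifert fibered pieces with hyperbolic base orbifold reduce to surface groups, whose conjugacy-class growth is well understood. Either route supplies the required lower bound on $N(t)\log(t)/t$ and completes the proof.
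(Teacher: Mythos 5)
Your treatment of the non-prime case is correct and matches the paper's route: split off one prime summand, note both factors have non-trivial fundamental group, and apply Theorem~\ref{thm:conn-intro}(a); this even handles $\R P^3\#\R P^3$ in one stroke, whereas the paper's Theorem~\ref{thm:three-dim-cases} must treat it separately only because its part (a) asserts the stronger exponential bound. The reduction of the non-solvable prime case to a finite cover with $b_1\ge 2$ plus the degree-$d$ counting argument is likewise the paper's Proposition~\ref{pro:infinite-virtual} combined with Proposition~\ref{pro:virtual-number}.

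However, your main route has a genuine gap at the prime, irreducible manifolds with \emph{infinite solvable} fundamental group. The claim that every closed aspherical $3$-manifold with infinite $\pi_1$ and $b_1\le 1$ admits a finite cover with $b_1\ge 2$ is false: a torus bundle over $S^1$ with Anosov monodromy (Sol geometry) is irreducible and aspherical with $b_1=1$, and every finite cover is again such a bundle with $b_1=1$, so $vb_1=1$. Agol's and Wise's results give $vb_1=\infty$ only for hyperbolic manifolds, and the extension to all irreducible manifolds recorded in Proposition~\ref{pro:infinite-virtual} explicitly excludes solvable fundamental groups (the toroidal case needs Luecke's theorem \cite{Lue}, which also assumes non-solvability). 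Bangert--Hingston \cite{BH} does not rescue this case either, since $\pi_1$ of a Sol manifold is not abelian. This is precisely why the paper's Theorem~\ref{thm:three-dim-cases}(c) invokes Taimanov's theorem \cite{T1993} for prime manifolds with infinite solvable fundamental group. Your ``elementary alternative'' does cite \cite{T1993} for Sol, but it introduces a second gap: a case analysis over the eight Thurston geometries misses irreducible manifolds with a non-trivial JSJ decomposition (graph and mixed manifolds), which carry no single geometry; the paper covers these through the incompressible-torus case of Proposition~\ref{pro:infinite-virtual}. Finally, a minor point: the prime decomposition and the geometrization statements you rely on are stated for orientable manifolds, so you should first pass to the orientation double cover as the paper does.
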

%%%%%%%%%%%%%%%%%%%%%%%%%%%%%%%%%%%%%%%%%%%%%%%%%%%%%%%%%%%%%%%%%%
We also give assumptions in Theorem~\ref{thm:three-dim-cases}
under which the growth
of $N(t)$ on a three-dimensional compact manifold is
exponential or polynomial of arbitrary order.
The function $N(t)$ has exponential growth if the
$3$-manifold is neither prime nor diffeomorphic to
the connected sum
$\R P^3\#\R P^3$ of two real-projective spaces $\R P^3.$
%%%%%%%%%%%%%%%%%%%%%%%%%%%%%%%%%%%%%%%%%%%%%%%%%%%%%%%%%%%%%%%%%%%%%
Using a result by Hingston~\cite[6.2]{Hi} applied to
a Riemannian metric on the $3$-sphere
all of whose closed geodesics are hyperbolic
Theorem~\ref{thm:three-dimension} implies
%%%%%%%%%%%%%%%%%%%%%%%%%%%%%%%%%%%%%%%%%%%%%%%%%%%%%%%%%%%%%%%%%%%
\begin{theorem}
\label{thm:three-dimensionA}
Let $M$ be a compact $3$-manifold.
Then for a
$C^4$-generic Riemannian
metric
on $M$
the
number $N(t)$ of geometrically distinct closed geodesics
of length $<t$ satisfies:
$\liminf_{t\to \infty} N(t)\log(t)/t>0.$
In particular there are infinitely many
geometrically distinct closed geodesics.
\end{theorem}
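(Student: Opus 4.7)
My plan is to dichotomize on $\pi_1(M)$. When $\pi_1(M)$ is infinite, Theorem~\ref{thm:three-dimension} applies to \emph{every} Riemannian metric on $M$, so in particular to any $C^4$-generic one, and no further work is needed. The theorem thus has content only in the case that $\pi_1(M)$ is finite, which is the focus of the rest of the proof.

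For finite $\pi_1(M)$ I would invoke Perelman's proof of the elliptization conjecture to identify $M$ with a spherical space form $S^3/\Gamma$, where $\Gamma$ is a finite group acting freely on $S^3$. A Riemannian metric $g$ on $M$ pulls back along the covering projection $p\colon S^3\to M$ to a $\Gamma$-invariant metric $\tilde g=p^*g$ on $S^3$. Since $p$ is a local isometry, each closed geodesic of $(S^3,\tilde g)$ projects to a closed geodesic of $(M,g)$ of the same length, and two closed geodesics of $(S^3,\tilde g)$ have the same projection if and only if they differ by an element of $\Gamma$; since $\Gamma$-orbits have at most $|\Gamma|$ elements, this yields
\begin{equation*}
N_M(t)\ \geq\ \frac{1}{|\Gamma|}\,N_{S^3}(t).
\end{equation*}
It therefore suffices to bound $N_{S^3}(t)\log(t)/t$ from below for the lifted metric $\tilde g$, and this is precisely what Hingston~\cite[6.2]{Hi} delivers under the hypothesis that every closed geodesic of $\tilde g$ be hyperbolic.

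What remains is to verify the hyperbolicity hypothesis for a $C^4$-generic $g$. I would argue by a $\Gamma$-equivariant version of the Klingenberg-Takens genericity theorem: perturbations of $g$ on $M$ correspond bijectively to $\Gamma$-invariant perturbations of $\tilde g$ on $S^3$, and the classical transversality proof of residuality of the "all closed geodesics hyperbolic" condition carries over to the $\Gamma$-equivariant class. The main obstacle is precisely this equivariant adaptation: one must check that the space of $\Gamma$-invariant perturbations on $S^3$ is still large enough to realize hyperbolicity of every closed geodesic one at a time, with particular care for closed geodesics on $S^3$ whose $\Gamma$-orbit is shorter than $|\Gamma|$ (i.e., whose projection to $M$ is an iterate). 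Once this technical point is granted, combining the two cases gives Theorem~\ref{thm:three-dimensionA}.
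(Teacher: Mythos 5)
Your reduction to the finite fundamental group case via Theorem~\ref{thm:three-dimension}, the use of the Elliptization Theorem to identify the universal cover with $S^3$, and the covering estimate $N_M(t)\ge N_{S^3}(t)/|\Gamma|$ all match the paper. The gap is in the last step: the condition ``all closed geodesics are hyperbolic'' is \emph{not} a $C^4$-generic property of Riemannian metrics, equivariantly or otherwise, so no refinement of the transversality argument will deliver it. A nondegenerate elliptic closed geodesic of twist type persists under small perturbations of the metric (and such geodesics certainly occur, e.g.\ near the round metric on $S^3$), so the set of metrics with a non-hyperbolic closed geodesic has nonempty interior. What Klingenberg--Takens actually gives (this is the ``twist metric'' condition of \cite{KT}, \cite[(3.11)]{Ba} used in the paper) is a $C^k$-generic \emph{dichotomy} for $k\ge 4$: either all closed geodesics are hyperbolic, or there exists a closed geodesic of twist type satisfying the hypotheses of the Birkhoff--Lewis fixed point theorem. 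Your argument covers only the first horn; the second horn is an essential separate case, and there the growth $\liminf_{t\to\infty}N(t)\log(t)/t>0$ comes not from Hingston but from the proof of the Birkhoff--Lewis theorem, which produces infinitely many closed geodesics accumulating on the twist geodesic with the required asymptotics.

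A secondary remark: the equivariant perturbation problem you flag as the ``main obstacle'' does not arise in the paper's argument, because the generic (twist metric) condition is imposed directly on the metric $g$ on $M$ rather than on its lift to $S^3$. In the all-hyperbolic branch one only needs the easy implication that hyperbolicity passes to lifts and iterates, so Hingston's theorem applies to $(S^3,p^*g)$; in the twist branch the Birkhoff--Lewis argument is run on $M$ itself. So the fix is not to strengthen the perturbation theory but to add the missing twist-type case.
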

%%%%%%%%%%%%%%%%%%%%%%%%%%%%%%%%%%%%%%%%%%%%%%%%%%%%%%%%%%%%%%%%%
The proof is presented in Section~\ref{sec:three}.
It is shown in \cite{Ra1} that a
$C^4$-generic Riemannian metric on a compact and
manifold with trivial or finite
fundamental group carries
infinitely many geometrically distinct closed
geodesics.
Therefore in this paper we concentrate on
compact manifolds with {\em infinite fundamental group.}

 For surfaces the corresponding statements holds
for an arbitrary Riemannian metric, cf. Remark~\ref{rem:surface}.
%%%%%%%%%%%%%%%%%%%%%%%%%%%%%%%%%%%%%%%%%%%%%%%%%%%%%%%%%%%%%%%
 \section*{Acknowledgements}
 We are grateful to Bernhard Leeb and Misha Kapovich for
 helpful discussions about the geometry and topology of
 three-manifolds.
%%%%%%%%%%%%%%%%%%%%%%%%%%%%%%%%%%%%%%%%%%%%%%%%%%%%%%%%%%%%%%
%%%%%%%%%%%%%%%%%%%%%%%%%%%%%%%%%%%%%%%%%%%%%%%%%%%%%%%%%%%
\section{Normal forms in free products of groups}
\label{sec:normal}
%%%%%%%%%%%%%%%%%%%%%%%%%%%%%%%%%%%%%%%%%%%%%%%%%%%%%%%%%%%
%%%%%%%%%%%%%%%%%%%%%%%%%%%%%%%%%%%%%%%%%%%%%%%%%%%%%%%%%%
We use the following terminology:
As a general reference for this section
we use Lyndon and Schupp's book on combinatorial group
theory, cf.~\cite[Sec. V.9]{LS}.
We consider the free product $G=G_1* G_2$
of two non-trivial groups $G_1,G_2.$
Any element
$g \in G=G_1 \ast G_2, g\not=1$ can be expressed
uniquely in {\em normal form} as a product of non-trivial elements
of the groups $G_1, G_2:$
\begin{equation*}
g=g_1 g_2 \cdots g_r\,, r\ge 1\,.
\end{equation*}
Here no two consecutive factors $g_j,g_{j+1}$
belong to the same group $G_j.$
Then $r=|g|$ is the {\em length} of $g.$
If there are two elements $g=g_1\cdots g_r c_1\cdots c_t$
and
$h=c_t^{-1}\cdots c_1^{-1} h_1\cdots h_s$ in normal form
with $h_1\not=g_r^{-1}$ then the letters
$c_1,\ldots,c_t$ {\em cancel} in the the product $gh.$
If $g_r$ and $h_1$ do not lie in the same factor $G_j$
the normal form of the
product is given by:
$g_1\cdots g_r h_1\cdots h_s.$
If $g_r$ and $h_1$ lie in the same factor $G_j$ and
$a=g_r h_1\not=1$ then the normal form of the product $gh$
is given by
$g_1\cdots g_{r-1} ah_2\cdots h_t.$
Then we say that $g_r$ and $h_1$ have been
{\em consolidated}. I.e. their product $a$ gives
a single element in the normal form of $gh.$

We call $g$ with normal form $g=g_1\cdots g_r$
{\em cyclically reduced} if either
$|g|\le 1$ or if $g_1$ and $g_r$ do not lie in the same factor
$G_j.$
Since we only consider a product of two factors this implies that
$r$ is even.
The element $g$ is called
{\em weakly reduced} if $|g|\le 1$ or $g_1\not=g^{-1}_r.$
A {\em cyclic permutation} of the normal form
$g=g_1\cdots g_r$ is of the form
$g'=g_s \cdots g_r g_1\cdots g_{s-1}$
for some $s\le r.$

Later on we will use the following
%%%%%%%%%%%%%%%%%%%%%%%%%%%%%%%%%%%%%%%%%%%%%%%%%%%%%%%%%%%%%%%%%
\begin{lemma}
\label{lem:cyclically}
Let $g=g_1\cdots g_r$ be a cyclically reduced word in
the free product $G=G_1 * G_2, $ i.e. $r$ is even.
Then any element $g'=h g h^{-1}$ for some $h \in G$
which is also cyclically reduced is a cyclic permutation
of the word $g.$
Hence
%\begin{equation*}
$ g'=g_{s}\cdots g_r g_1 g_2\cdots g_{s-1}$
%\end{equation*}
for some $s \le r.$
\end{lemma}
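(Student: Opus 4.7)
The plan is to induct on $t := |h|$, the length of the normal form of the conjugating element. The base case $t=0$ is immediate, since then $g'=g$. For the inductive step, write $h = h_1 \cdots h_t$ in normal form with $t \geq 1$, and assume without loss of generality that $g_1 \in G_1$, so that $g_r \in G_2$ (as $r$ is even). The strategy is first to analyze the single-letter conjugate $\tilde g := h_t g h_t^{-1}$ and then, when $\tilde g$ turns out to be cyclically reduced, to apply the inductive hypothesis to the shorter conjugator $h' := h_1 \cdots h_{t-1}$ acting on $\tilde g$.

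Computing $h_t g h_t^{-1}$ amounts to checking consolidation at the two seams $(h_t, g_1)$ and $(g_r, h_t^{-1})$; exactly one of these seams is active, depending on whether $h_t$ lies in $G_1$ or in $G_2$. Three cases arise: (i) $h_t = g_r$, in which case the right seam cancels and $\tilde g = g_r g_1 \cdots g_{r-1}$; (ii) $h_t = g_1^{-1}$, in which case the left seam cancels and $\tilde g = g_2 \cdots g_r g_1$; or (iii) the remaining possibilities, where the active seam produces a non-trivial consolidated letter $c$ in the same factor as $h_t$. In cases (i) and (ii), $\tilde g$ is a cyclically reduced cyclic permutation of $g$, and since $g' = h' \tilde g h'^{-1}$ with $|h'| < t$, the inductive hypothesis closes the argument.

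It therefore suffices to rule out case (iii). Suppose for definiteness that $h_t \in G_2 \setminus \{g_r\}$, so that $c := g_r h_t^{-1}$ is a non-trivial element of $G_2$. Then the concatenation
\begin{equation*}
h_1 \cdots h_{t-1}\, h_t\, g_1 \cdots g_{r-1}\, c\, h_{t-1}^{-1} \cdots h_1^{-1}
\end{equation*}
represents $g'$, and a direct check shows that every adjacent pair of letters lies in opposite factors: the two inner seams $h_t g_1$ and $c h_{t-1}^{-1}$ both cross from $G_2$ to $G_1$, and all remaining junctions alternate because $h$ is in normal form and $g$ is cyclically reduced. Consequently this expression is already the normal form of $g'$. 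Its first letter $h_1$ and its last letter (which is $h_1^{-1}$ when $t \geq 2$, or $c$ when $t = 1$) then both lie in the same factor as $h_1$, so $g'$ cannot be cyclically reduced, contradicting the hypothesis. The symmetric subcase $h_t \in G_1 \setminus \{g_1^{-1}\}$ is handled identically, interchanging the roles of the two seams. The main obstacle is exactly this bookkeeping: verifying that no outer conjugation layer can repair the non-reducedness introduced at the innermost seam in case (iii); once that is in hand, the induction closes cleanly.
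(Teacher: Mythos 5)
Your proof is correct, and its kernel coincides with the paper's: if the innermost letter of the conjugator consolidates rather than cancels, the resulting expression is already a normal form of odd length whose first and last letters lie in the same factor, so the conjugate cannot be cyclically reduced. The difference is organizational. The paper strips the whole cancelling part of $h$ at once: after observing that $h$ and $hg^{k}$ give the same conjugate, it writes $h=h_1\cdots h_t\,g_{s-1}^{-1}\cdots g_1^{-1}$ with $s$ maximal and exhibits the normal form $h_1\cdots h_{t-1}\,a\,g_{s+1}\cdots g_r g_1\cdots g_{s-1}h_t^{-1}\cdots h_1^{-1}$ of $g'$ in a single computation. You instead induct on $|h|$, peeling one letter at a time: a cancelling letter performs a single cyclic rotation and hands a strictly shorter conjugation problem to the inductive hypothesis, while a non-cancelling letter yields the contradiction immediately. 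Your route avoids the paper's slightly delicate reduction modulo powers of $g$ and the extraction of a maximal cancelling suffix, at the cost of a three-way case split at each layer; the two arguments are otherwise interchangeable, and your case analysis (including the $t=1$ endpoint check) is complete.
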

%%%%%%%%%%%%%%%%%%%%%%%%%%%%%%%%%%%%%%%%%%%%%%%%%%%%%%%%%%%%%%%%%
\begin{proof}
If $h=g_{s-1}^{-1}\cdots g_1^{-1}$ then
$h g h^{-1}=g_{s}\cdots g_r g_1\cdots g_{s-1}.$
For $h=h' g^{k}, k\in \mathbb{Z}$ we obtain
$h g h^{-1}= h' g h'^{-1}.$
%If $h=h'g^k$ then $h g h^{-1}=h' g h'^{-1}.$
Therefore we consider now the largest number $s \le r$
such that
$h$ has the following normal form:
$h=h_1\cdots h_t g_{s-1}^{-1}\cdots g_1^{-1}$
where $h_t$ lies in the same factor as $g_{s}$ and
$h_t g_{s}\not=1.$
Then the normal form of $g'=hgh^{-1}$ is
given by:
\begin{equation*}
h_1\cdots h_{t-1} a g_{s+1}\cdots g_r g_1
\cdots g_{s-1} h_t^{-1}\cdots h_1^{-1}\,, a=h_tg_{s}\,.
\end{equation*}
Hence this normal form has odd length $|g'|=r+2t-1,$ i.e.
it is not cyclically reduced. This also follows
from the fact that the first element $h_1$ and the last
element $h_1^{-1}$ of the normal form belong to the same factor.
\end{proof}
%%%%%%%%%%%%%%%%%%%%%%%%%%%%%%%%%%%%%%%%%%%%%%%%%%%%%%%%%%%%%
%%%%%%%%%%%%%%%%%%%%%%%%%%%%%%%%%%%%%%%%%%%%%%%%%%%%%%%%%%%%%
\begin{lemma}
\label{lem:permutation}
For any $g' \in G=G_1*G_2, g'\not=1$
there is an element $g \in G$
with cyclically reduced normal form
$g=g_1 \cdots g_r$ such that $g'$ and $g$ are
conjugate, i.e. there exists an element
$h \in G$ with $g'=h g h^ {-1}.$
And any element $g_*$ which is conjugate to $g'$
and has cyclically reduced normal form is
a permutation of $g.$
\end{lemma}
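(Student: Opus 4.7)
The plan is to prove existence first, by an induction that shortens $g'$ through conjugation until the word is cyclically reduced, and then to obtain uniqueness up to cyclic permutation as an immediate consequence of Lemma~\ref{lem:cyclically}.

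For existence I would induct on the length $|g'|$ of the normal form $g' = g'_1 \cdots g'_s$. If $s \le 1$ or if $g'_1$ and $g'_s$ lie in different factors, $g'$ itself is cyclically reduced and we set $g = g'$. Otherwise $g'_1, g'_s \in G_j$ for the same $j \in \{1,2\}$, and I conjugate by $g'_1$:
\begin{equation*}
(g'_1)^{-1}\, g'\, g'_1 \;=\; g'_2 \cdots g'_{s-1}\, (g'_s\, g'_1).
\end{equation*}
There are two cases: if $g'_s g'_1 = 1$ in $G_j$, the right side reduces to $g'_2 \cdots g'_{s-1}$, of length $s-2$; if $g'_s g'_1 = a \ne 1$, the last two letters consolidate into $a \in G_j$, yielding a normal form of length $s-1$ (note $g'_{s-1}$ lies in the factor opposite to $G_j$, so no further consolidation occurs at that end). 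In both cases the resulting element is conjugate to $g'$ and strictly shorter, so the induction hypothesis produces a cyclically reduced conjugate $g$.

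For uniqueness, suppose $g_*$ is any cyclically reduced element conjugate to $g'$. Then $g_*$ is also conjugate to the $g$ just constructed, say $g_* = h\,g\,h^{-1}$ for some $h \in G$, and both $g$ and $g_*$ are cyclically reduced. Lemma~\ref{lem:cyclically} applied to this pair yields directly that $g_*$ is a cyclic permutation of $g$, which is what the statement asserts.

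The argument has essentially no hard step once Lemma~\ref{lem:cyclically} is available; the only place that needs care is the bookkeeping of cancellation versus consolidation in the inductive reduction, to verify that the word length genuinely decreases at every step and that the consolidated letter does not itself trigger further cancellation with the remaining interior letters of the word.
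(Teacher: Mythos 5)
Your proof is correct and follows essentially the same route as the paper: repeatedly conjugating to cancel or consolidate the end letters until a cyclically reduced form is reached, and then invoking Lemma~\ref{lem:cyclically} for uniqueness up to cyclic permutation. The only cosmetic difference is that you reduce one letter at a time by induction, whereas the paper strips off all cancelling pairs at once and then performs a single final consolidating conjugation.
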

%%%%%%%%%%%%%%%%%%%%%%%%%%%%%%%%%%%%%%%%%%%%%%%%%%%%%%%%%%%%%%
%%%%%%%%%%%%%%%%%%%%%%%%%%%%%%%%%%%%%%%%%%%%%%%%%%%%%%%%%%%%%%
\begin{proof}
The normal form $g'=g_1\cdots g_r$ is uniquely determined.
If $r$ is even, this is already a cyclically reduced normal
form. Therefore we assume $r$ is odd, i.e. $g_1,g_r$ belong to the
same factor $G_j.$ If $g_r g_1=1$ we obtain an element
$g'_1=g_1^{-1}g'g_1=g_2\cdots g_{r-1}$
in normal form which is conjugate
to $g'.$ After finitely many steps we obtain an element
$g'_s=g_{s+1}\cdots g_{r-s}$ in normal form conjugate to
$g'$ with $g_{s+1},g_{r-s}$ lying in the same factor $G_j$
and satisfying $g_{r-s}g_{s+1}\not=1.$ Hence this is a
weakly reduced normal form. Then
$g:=g_{s+1}^{-1} g'_s g_{s+1}$ is conjugate to $g'$
and has normal form $g=g_{s+2}\cdots g_{r-s-1} a$
with $a=g_{r-s}g_{s+1}^{-1}.$ Hence this normal
form is cyclically reduced.
Now Lemma~\ref{lem:cyclically} implies that
this $g$ is uniquely determined up to permutation.
\end{proof}
%%%%%%%%%%%%%%%%%%%%%%%%%%%%%%%%%%%%%%%%%%%%%%%%%%%%%%%%%%%%%%%%
For a finitely generated group $G$ with a finite
set $E$ of generators the {\em word length}
$w_E(g)$ of an element $g \in G$ is given as the
minimal number of elements $e_1,\ldots,e_k\in E$ such that
$g=e_1\cdots e_k.$
Let $G_1,G_2$ be finitely generated
groups, i.e. there exist
finite generating sets $E_j \subset G_j,j=1,2$
for $G_1$ and $G_2.$
We denote by $E$ the disjoint
union of $E_1$ and $E_2.$
This is a generating set for $G=G_1\ast G_2.$
For $g \in G=G_1*G_2$
with normal form $g=g_1 \cdots g_r$ the word length
can be expressed as follows:
$w_E(g)=w_{E}(g_1)+\ldots+w_{E}(g_r).$
%%%%%%%%%%%%%%%%%%%%%%%%%%%%%%%%%%%%%%%%%%%%%%%%%%%%%%%%%%%%%%%%
\begin{proposition}
\label{pro:w}
Let $G=G_1*G_2$ be the free product of two non-trivial
and finitely generated groups $G_1,G_2$ with
generating sets $E_1,E_2$ with disjoint union $E.$
Let $\overline{G}=\{[g]; g\in G\}$ be the set of conjugacy classes,
i.e. $[g]=\{h g h^ {-1}\,;\, h \in G\}.$
We define the function
 $F(k):=\# \{[g] \in \overline{G}\,;\, g \in G; w_E(g)\le k\}.$
If at least one of the groups $G_1,G_2$ has
at least three elements,
then there is a constant $\lambda >1$ such that
$ F(k) \ge \lambda^k.$
\end{proposition}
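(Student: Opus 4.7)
The plan is to exhibit an explicit family of cyclically reduced normal-form words in $G$ indexed by binary sequences, and then count them modulo conjugacy using Lemma~\ref{lem:permutation}.

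Without loss of generality assume $|G_1|\ge 3$, so I may fix two distinct non-identity elements $a,b\in G_1$; since $G_2$ is non-trivial, fix also some $c\in G_2\setminus\{1\}$. For every $m\ge 1$ and every sequence $\mathbf{x}=(x_1,\ldots,x_m)\in\{a,b\}^m$ consider the element
\[
g_{\mathbf{x}} := x_1\, c\, x_2\, c \cdots x_m\, c \in G.
\]
This expression is already in normal form of length $2m$; since its first letter lies in $G_1$ and its last letter in $G_2$, it is cyclically reduced. Setting $C:=\max\{w_E(a),w_E(b)\}$ and $D:=w_E(c)$, the identity $w_E(g)=w_E(g_1)+\cdots+w_E(g_r)$ for normal forms yields $w_E(g_{\mathbf{x}})\le m(C+D)$. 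Uniqueness of the normal form also gives $g_{\mathbf{x}}\ne g_{\mathbf{y}}$ whenever $\mathbf{x}\ne\mathbf{y}$.

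Next I count conjugacy classes. By Lemma~\ref{lem:permutation}, two elements with cyclically reduced normal forms are conjugate if and only if one normal form is a cyclic permutation of the other. Among the $2m$ cyclic permutations of $g_{\mathbf{x}}$, only those starting at an odd position begin with a $G_1$-letter followed by $c$, so only those $m$ permutations can equal $g_{\mathbf{y}}$ for some $\mathbf{y}$. Hence the $2^m$ elements $\{g_{\mathbf{x}}\}_{\mathbf{x}\in\{a,b\}^m}$ are partitioned into conjugacy classes each containing at most $m$ of them, producing at least $2^m/m$ distinct conjugacy classes, each with a representative of $E$-length at most $m(C+D)$.

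Given $k$, choosing $m:=\lfloor k/(C+D)\rfloor$ then gives $F(k)\ge 2^m/m$, which is eventually $\ge\lambda^k$ for any fixed $\lambda<2^{1/(C+D)}$; after shrinking $\lambda>1$ slightly the inequality holds for all $k\ge 0$, because $F$ is non-decreasing and $F(0)=1$. The step most liable to subtle errors is the middle paragraph: I must verify carefully that only cyclic permutations preserving the alternation pattern can again be of the form $g_{\mathbf{y}}$, and that Lemma~\ref{lem:permutation} is being applied to the right pair of representatives. A secondary concern is that a periodic sequence $\mathbf{x}$ has fewer than $m$ distinct cyclic shifts, but this only \emph{raises} the number of conjugacy classes, so the bound $2^m/m$ survives without refinement.
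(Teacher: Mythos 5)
Your proof is correct and follows essentially the same route as the paper: both arguments build $2^m$ cyclically reduced words alternating a fixed letter of one factor with a two-letter choice from the other, bound their word lengths linearly in $m$, and invoke Lemma~\ref{lem:permutation} to show conjugacy forces a cyclic shift, yielding at least $2^m/m$ classes. The only (immaterial) differences are that you place the two-element alphabet in the factor of order $\ge 3$ and pick arbitrary non-identity elements $a,b$ rather than generators, whereas the paper uses generators and the device $b_2=b_1^2$; your final paragraph converting $F(3r)\ge 2^r/r$ into $F(k)\ge\lambda^k$ is actually spelled out more carefully than in the paper.
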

%%%%%%%%%%%%%%%%%%%%%%%%%%%%%%%%%%%%%%%%%%%%%%%%%%%%%%%%%%%%%%%%%
%%%%%%%%%%%%%%%%%%%%%%%%%%%%%%%%%%%%%%%%%%%%%%%%%%%%%%%%%%%%%%%%%
\begin{proof}
Let $a$ be an element of the generating set $E_1$ for $G_1$ and
$b_1$ be an element from the generating set $E_2$ for $G_2.$
If $b_1^2\not=1,$ let $b_2:=b_1^2.$
In this case $w_E(b_2)=w_E(b_1^2)=2.$
Otherwise let $b_2$ be an element
from the set $E_2-\{b_1\}.$
This set is non-empty since by assumption
$G_2$ has at least three elements.

Let $r \ge 1$ and let $(m_1,m_2,\ldots,m_r)$
be an $r$-tuple with $1 \le m_j \le 2, j=1,\ldots,r.$
Then we define an element
\begin{equation}
\label{eq:gm}
g(m_1,\ldots,m_r)=a b_{m_1} a b_{m_2}\cdots a b_{m_r}.
\end{equation}
This is a cyclically reduced normal form since
$|g(m_1,\ldots,m_r)|=2r$ is even.
Hence $w_E\left(g(m_1,\ldots,m_r)\right)\le 3 r.$
We conclude from Lemma~\ref{lem:permutation} that the
conjugacy classes of $g(m_1,\ldots,m_r)$
and $g(m'_1,\ldots,m'_r)$ coincide if and only if
the tuples $(m_1,\ldots,m_r)$
and $(m'_1,\ldots,m'_r)$ coincide up to a cyclic permutation.
Hence
$$F(3r) \ge 2^r / r. $$
\end{proof}
%%%%%%%%%%%%%%%%%%%%%%%%%%%%%%%%%%%%%%%%%%%%%%%%%%%%%%%%%%%%%%%%%%%
%%%%%%%%%%%%%%%%%%%%%%%%%%%%%%%%%%%%%%%%%%%%%%%%%%%%%%%%%%%%%%%%%%%
\begin{remark}
\rm
(a)
The Proof also shows that $G_1\ast G_2$ has a free non-cyclic
subgroup of rank at least $2$ if $G_2$ has at least three elements.
The elements $ab_1$ and $a b_2$ are generators for the free subgroup
of rank $2.$

\medskip

(b) The free group $G=\langle a_1,\ldots,a_k\rangle$
of rank $k$ generated by $a_1,\ldots,a_k$
is isomorphic to the free product $\langle a_1 \rangle\ast
\ldots\ast \langle a_k\rangle.$
Hence the assumptions of Proposition~\ref{pro:w} are
satisfied for a non-cyclic free group $G.$
\end{remark}
%%%%%%%%%%%%%%%%%%%%%%%%%%%%%%%%%%%%%%%%%%%%%%%%%%%%%%%%%%%
%%%%%%%%%%%%%%%%%%%%%%%%%%%%%%%%%%%%%%%%%%%%%%%%%%%%%%%%%%%
\section{Closed geodesics on connected sums
$M_1\#M_2$ with $\pi_1(M_1)\not=0, \pi_1(M_2)\not= 0.$}
\label{sec:conn}
%%%%%%%%%%%%%%%%%%%%%%%%%%%%%%%%%%%%%%%%%%%%%%%%%%%%%%%%%%%
%%%%%%%%%%%%%%%%%%%%%%%%%%%%%%%%%%%%%%%%%%%%%%%%%%%%%%%%%%
We apply the results of the last section to the fundamental
group of a connected sum
of two compact manifolds with non-trivial fundamental group
to estimate the number $N(t)$ of
geometrically distinct closed geodesics of length $\le t.$
Two closed geodesics $c_1,c_2:S^1\longrightarrow M$
are called {\em geometrically equivalent} if
the images coincide as point sets, i.e. $c_1(S^1)=c_2(s^1)$
and if in addition in the case of a non-reversible Finsler
metric also the orientations of the curves coincide.
Therefore in the estimates for the function $N(t)$
depending on the reversibility of the Finsler metric
a factor $2$ could occur in the estimates for the
function $N(t)$ which would not affect our estimates for
the asymptotics of $N(t).$

In every non-trivial
free homotopy class of closed curves
on a compact manifold
there is a
shortest closed curve which is a closed geodesic.
Two closed curves $c_1,c_2: [0,1]\longrightarrow M$
with $c_j(0)=c_j(1)=p, j=1,2$
are {\em freely homotopic} if their homotopy classes in
the fundamental group $\pi_1(M)=\pi_1(M,p)$ are conjugate.
We use that the fundamental group $\pi_1(M)$ of a connected
sum $M=M_1 \# M_2$ of two manifolds of dimension $n \ge 3$
is isomorphic to the free product of the fundamental groups
$\pi_1(M_j),j=1,2:$
\begin{equation}
\label{eq:fund-group-conn-sum}
\pi_1\left(M_1\# M_2\right)\cong
\pi_1\left(M_1\right)\ast \pi_1\left(M_2\right)\,.
\end{equation}
This follows from Seifert-Van Kampen's
theorem~\cite[ch.III Prop.9.4]{Br}.
Before we give the Proof of Theorem~\ref{thm:conn-intro}
we discuss examples:
%%%%%%%%%%%%%%%%%%%%%%%%%%%%%%%%%%%%%%%%%%%%%%%%%%%%%%%%%%%%
\begin{example}
\rm
(a)
The {\em infinite dihedral group}
$\z_2 \ast \z_2$ is isomorphic to the fundamental group
$\pi_1(M)$ of
the connected sum $M=M_1 \# M_2$ of two manifolds
with $\pi_1(M)\cong \pi_1(M_2)\cong \z_2.$ Then
$M$ has a twofold covering $\widetilde{M}$ with
$\pi_1(\widetilde{M})\cong\z,$ since
$\z$ is a normal subgroup of $\z_2\ast \z_2$ of index $2.$
Hence the free product $\z_2 \ast \z_2$
is isomorphic to a semidirect product $
\z \ast \z \cong \z \rtimes \z_2,$
compare the following Proof.
For example the connected sum $M=\R P^n \# \R P^n$
of two real projective spaces
satisfies this assumption.

\smallskip

(b) The {\em modular group} ${\rm PSL}_2(\z)={\rm SL}_2(\z)/\z_2$
is isomorphic
to the free product $\z_2\ast \z_3.$
It is isomorphic to
the fundamental group of the connected sum
$\R P^{2m+1} \# S^{2m+1}/\z_3, m \ge 1$ of the real projective
space $\R P^ {2m+1}$ and the lens space
$S^{2m+1}/\z_3.$ It satisfies the assumption of Part (b).

\smallskip

(c) For $n \ge 3$
the free group of rank $k \ge 2$ is isomorphic to
the fundamental group of the connected sum
$M=\left(S^1 \times S^{n-1}\right)\#
\cdots \#
\left(S^1 \times S^{n-1}\right)$ of $k$ copies of
$S^1\times S^{n-1}.$
Then the assumption of Part (b) is satisfied.
\end{example}
%%%%%%%%%%%%%%%%%%%%%%%%%%%%%%%%%%%%%%%%%%%%%%%%%%%%%%%%%%%%
Now we give the {\bf Proof of Theorem}~\ref{thm:conn-intro}
stated in the Introduction under the assumption
$\pi_1(M_1)\not=0, \pi_1(M_2)\not=0:$
\begin{proof}
(a) Assume $\pi_1=\pi_1(M)\cong \pi_1(M_2)\cong \z_2.$
Let $a$ and $b$ be the generators of the two copies of $\Z_2$,
hence
$\pi_1 \cong \langle a \rangle \ast \langle b \rangle$.
We put $t=ab$. Then $\pi_1$ is generated by $a$ and $t$ and
the following relation holds:
$ata^{-1} = t^{-1}$.
Therefore $\pi_1$ contains a subgroup which is generated by $t$,
is isomorphic to $\Z$ and whose index is equal to two,
in particular, it is a normal subgroup.

Then there is a twofold covering space
$\widetilde{M}$ with
$\pi_1(\widetilde{M}) = \langle t \rangle \cong \Z\,.$
The claim follows from
a result by Bangert and Hingston~\cite[Thm.]{BH}.

\smallskip

(b)
Let $a$ be an element of a generating set $E_1$ for $\pi_1(M)$ and
$b_1$ be an element from a generating set $E_2$ for $\pi_1(M_2).$
If $b_1^2\not=1,$ let $b_2=b_1^2.$ Otherwise let
$b_2$ be an element from $E_2-\{b_1\},$
which exists by assumption.
And let $r \ge 1$ and let $(m_1,m_2,\ldots,m_r)$ be an $r$-tuple with
$1 \le m_j \le 2.$

Let $\mathcal{M}$ be a maximal subset of the tuples
$(m_1,m_2,\ldots,m_r)$ with $1\le m_j \le 2$ and such that no
distinct tuples in $\mathcal{M}$ are a cyclic permutation of
one another. Then $\#\mathcal{M} \ge 2^r/r.$
The elements $g(m_1,\ldots,m_r)$
for $(m_1,\ldots,m_r)\in \mathcal{M}$ defined by
Equation~\eqref{eq:gm}
define at least $2^r/r$ pairwise distinct free conjugacy
classes, cf. the Proof of Proposition~\ref{pro:w}
and Lemma~\ref{lem:permutation}.

Let $L$ be the maximum
length of the shortest closed curves in
the homotopy classes $a, b_1,b_2 $ $\in \pi_1(M,p).$
Hence it is the maximum
of the lengths of shortest geodesic loops with base point
$p$ in the homotopy classes $a,b_1,b_2.$ Then
there exist closed geodesics
$\gamma (m_1,\ldots,m_r)$ for $(m_1,\ldots,m_r) \in
\mathcal{M}$
freely homotopic to $g(m_1,\ldots,m_r)$
which are pairwise distinct
and pairwise not freely homotopic.
We obtain
\begin{equation*}
 L(\gamma (m_1,\ldots,m_r)) \le 3r L\,.
\end{equation*}
Two distinct closed geodesics
$\gamma(m_1,\ldots,m_r)$ can be geometrically equivalent only
if they are coverings of the same prime closed geodesic.

Let $L_1$ be the length of a shortest
homotopically non-trivial closed geodesic on $M.$
Then the multiplicity of $\gamma (m_1,\ldots,m_r)$ is bounded
from above by $3r L /L_1.$ Hence we obtain for the number
$N(3rL)$
of geometrically distinct closed geodesics
of length $\le 3rL$ as lower bound:
\begin{equation*}
 N\left(3rL\right)\ge \frac{2^r L_1}{3r^2 L}\,.
\end{equation*}
This finishes the proof.
\end{proof}
%%%%%%%%%%%%%%%%%%%%%%%%%%%%%%%%%%%%%%%%%%%%%%%%%%%%%%%%%%%
\begin{remark}
\rm
The above Proof of Theorem~\ref{thm:conn-intro}
also shows that the following statements hold
under the assumption $\pi_1(M_1)\not=0,\not\cong \z_2$
and $\pi_1(M_2)\not= 0:$

\smallskip

(d) The fundamental group $\pi_1(M)$ has {\em exponential
growth.} By definition this means that for a generating set
$E$ the function
$\#\left\{g\in G; w_E(g)\le k\right\}$
has exponential growth.

\smallskip

(e) For points $p,q \in M$ denote by $n_t(p,q)$ the number of
geodesics of length $\le t$ joining $p$ and $q.$
Then
one can show with the results of Section~\ref{sec:normal}
that for all $p,q \in M:$
$\liminf_{t\to \infty} \log (n_t(p,q))/t>0\,.$
Then the {\em topological entropy} $h_{top}(\phi)$ of the
geodesic flow $\phi: T^1M \longrightarrow T^1M$ on the
unit tangent bundle
$T^1M$ is positive,
cf. \cite[Prop.4.1]{Ma}.
\end{remark}
%%%%%%%%%%%%%%%%%%%%%%%%%%%%%%%%%%%%%%%%%%%%%%%%%%%%%%%%%%%%
%%%%%%%%%%%%%%%%%%%%%%%%%%%%%%%%%%%%%%%%%%%%%%%%%%%%%%%%%%%%
%%%%%%%%%%%%%%%%%%%%%%%%%%%%%%%%%%%%%%%%%%%%%%%%%%%%%%%%%%%
\section{Closed geodesics on connected sums
$M_1\#M_2$ with $b_1(M_1)= 1$}
\label{sec:connA}

In this section we give the
{\bf Proof of Theorem}~\ref{thm:conn-intro}
stated in the Introduction under the assumption
$b_1(M_1)=1$ and $M_2$ not homeomorphic to
$S^n:$

%\begin{theorem}
%Let $M = M_1 \# M_2$ with $M_1,M_2$ $n$-dimensional closed manifolds such that %$H_1(M_1)$ is infinite, $\pi_1(M_2)=0$ and $M_2$ is not homeomorphic to the %$n$-sphere. Then every metric on $M$ admits infinitely many geometrically distinct %closed geodesics.
%\end{theorem}

\begin{proof}
It is enough to consider the case of $b_1(M_1)=\rk_{\z}H_1(M_1,\z) = 1,$
cf. Propostion~\ref{pro:virtual-number}.
Let $h \in \pi_1(M_1)$ and the corresponding homology class $[h]$ be of infinite order, i.e.,
generate $H_1(M_1)/\mathrm{Torsion}$:
$$
H_1(M_1) = \Z[h] \oplus \mathrm{Torsion}.
$$
%%%%%%%%%%%%%%%%%%%%%%%%%%%%%%%%%%%%%%%%%%%%%%%%%%%%%%%%%%%%
%%%%%%%%%%%%%%%%%%%%%%%%%%%%%%%%%%%%%%%%%%%%%%%%%%%%%%%%%%%%
\begin{proposition}
\label{pro:pk}
For $k \leq n-2$
$$
\pi_k(M_1 \# M_2) = \pi_k(M_1 \vee M_2).
$$
\end{proposition}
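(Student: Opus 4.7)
The plan is to realize $M_1 \vee M_2$ as $M_1 \# M_2$ with a single $n$-cell attached (up to homotopy) and then to read off the agreement of homotopy groups in the range $k \le n-2$ from the long exact sequence of the resulting pair.

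First I would make the connecting sphere explicit: write
\[
M_1 \# M_2 \;=\; (M_1 \setminus B_1^\circ) \cup_{S} (M_2 \setminus B_2^\circ),
\]
where $B_i^\circ \subset M_i$ are open $n$-balls with closures $\bar B_i$ and $S \cong S^{n-1}$ is the common gluing boundary. Collapsing $S$ in this pushout decomposes as
\[
(M_1 \# M_2)/S \;\cong\; (M_1/\bar B_1) \vee (M_2/\bar B_2).
\]
Each closed ball $\bar B_i \subset M_i$ is contractible and its inclusion is a cofibration (use a collar of $\partial \bar B_i$), so $M_i/\bar B_i$ is homotopy equivalent to $M_i$. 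Hence $(M_1 \# M_2)/S \simeq M_1 \vee M_2$.

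Because $S$ itself has a bicollar in $M_1 \# M_2$, the inclusion $S \hookrightarrow M_1 \# M_2$ is also a cofibration, so its quotient is homotopy equivalent to the mapping cone $(M_1 \# M_2) \cup_S D^n$. Combined with the previous step,
\[
M_1 \vee M_2 \;\simeq\; (M_1 \# M_2) \cup_S D^n,
\]
so up to homotopy $M_1 \vee M_2$ is built from $M_1 \# M_2$ by attaching a single $n$-cell. For such a CW pair, cellular approximation gives
\[
\pi_k\bigl((M_1 \# M_2) \cup_S D^n,\; M_1 \# M_2\bigr) = 0 \quad \text{for } k < n,
\]
since any map $(D^k, S^{k-1}) \to (\text{pair})$ with $k < n$ deforms into $M_1 \# M_2$. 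Feeding this vanishing into the long exact sequence of the pair forces
\[
\pi_k(M_1 \# M_2) \;\cong\; \pi_k(M_1 \vee M_2), \qquad k \le n-2,
\]
with the isomorphism induced by the collapse map $M_1 \# M_2 \to (M_1 \# M_2)/S \simeq M_1 \vee M_2$.

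The only non-formal ingredient is the identification of the cofiber of $S \hookrightarrow M_1 \# M_2$ with the wedge; once that homotopy equivalence is established, the remainder is a standard relative-dimension argument. I do not anticipate a serious obstacle. The cutoff $k \le n-2$ is exactly what the long exact sequence dictates, since isomorphism on $\pi_k$ requires vanishing of both $\pi_k$ and $\pi_{k+1}$ of the pair, i.e.\ $k+1 \le n-1$.
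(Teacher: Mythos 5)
Your proof is correct, and while it rests on the same underlying principle as the paper's argument --- that adding or deleting cells of dimension $n$ cannot change $\pi_k$ for $k \le n-2$ --- the packaging is genuinely different. The paper builds an explicit CW structure on the connecting cylinder $S^{n-1}\times[0,1]$ (a $1$-cell joining basepoints $p_1,p_2$ of the two boundary spheres plus an $n$-cell $D_0$), uses cellular approximation to see that $\pi_k(M)=\pi_k(M\setminus D_0)$ for $k+1<n$, and then caps off the two boundary spheres with $n$-discs to obtain $M_1$ and $M_2$ joined by an arc, which collapses to the wedge; this involves three separate $n$-cell modifications and an implicit second use of cellular approximation when the discs are glued back in. You instead observe that the cofiber of the bicollared separating sphere $S\hookrightarrow M_1\# M_2$ is simultaneously the wedge $M_1\vee M_2$ (because collapsing each contractible ball $\bar B_i\subset M_i$ is a homotopy equivalence) and, up to homotopy, the result of attaching a single $n$-cell to $M_1\# M_2$; one application of the long exact sequence of the pair then finishes the argument and makes the cutoff $k\le n-2$ completely transparent. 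Your route is cleaner and more economical; the paper's more hands-on cell decomposition has the mild advantage of exhibiting the identification concretely, in the spirit of the explicit cell-level bookkeeping it performs again in the proof of Proposition~\ref{pro:wedge}. The only hypotheses you should state explicitly are the cofibration properties of $(M_i,\bar B_i)$ and $(M_1\# M_2,S)$, which your collar and bicollar remarks already supply, and the (harmless) fact that a compact smooth manifold admits a CW structure so that cellular approximation applies.
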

%%%%%%%%%%%%%%%%%%%%%%%%%%%%%%%%%%%%%%%%%%%%%%%%%%%%%%%%%%%%
%%%%%%%%%%%%%%%%%%%%%%%%%%%%%%%%%%%%%%%%%%%%%%%%%%%%%%%%%%%%

{\sc Proof of Proposition}. The manifold $M$ is obtained by removing $n$-discs $D_1$ and $D_2$ from $M_1$ and $M_2$ and gluing to the boundaries the cylinder $C = S^{n-1} \times [0,1]$. We take cell decompositions of $\partial D_1 = $ and $\partial D_2$ into unions of $(n-1)$-cells and the points $p_1$ and $p_2$.
These cell decompositions are extended to the cell decomposition of $C$ by adding to them $1$-cell joining $p_1$ and $p_2$ and an $n$-cell $D_0$. Let us now extend the cell decomposition of $C$ to a cell decomposition of $M$.

Any map of $l$-dimensional CW complex $X$ into $M$ is homotopic to a map into the $l$-skeleton $M^{(l)}$ of $M$. Therefore any map of a CW complex of dimension $\leq k+1 < n$  into $M$ is homotopic to a map into $M \setminus D_0$ and this imply that
$$
\pi_k(M_1 \# M_2) = \pi_k(M \setminus D_0).
$$
Moreover the same is valid if we add $n$-discs to $\partial D_1$ and $\partial D_2$ in $M \setminus D_0$. The resulted complex $Y$ is homotopic to the manifolds $M_1$ and $M_2$ which are connected by an interval attached to the interval attached to $p_1 \in M_1$ and $p_2 \in M_2$. By shrinking this interval to a point we obtain the bouquet $M_1 \vee M_2$ which is homotopically equivalent to $Y$. This proves
Proposition~\ref{pro:pk}
%%%%%%%%%%%%%%%%%%%%%%%%%%%%%%%%%%%%%%%%%%%%%%%%%%%%%%%%%%%%
%%%%%%%%%%%%%%%%%%%%%%%%%%%%%%%%%%%%%%%%%%%%%%%%%%%%%%%%%%%%
\begin{proposition}
\label{pro:wedge}
Let $M_2$ be $(k-1)$-connected with $k \geq 2$. Then
$$
\pi_1(M_1 \vee M_2) = \pi_1(M) = \pi_1(M_1),
$$
$$
\pi_k(M_1 \vee M_2) = A_1 \oplus A_2, \ \ A_1 = \pi_k(M_1), \ \ A_2 = \Z[\pi_1] \otimes_\Z H_k(M_2),
$$
$A_1$ and $A_2$ are submodules over $\Z[\pi_1]$,
the action of $\pi_1$ on $A_1$ is induced by the action of $\pi_1(M_1)$ on $\pi_k(M_1)$
and $\pi_1$ acts on $A_2$ by multiplications in $\Z[\pi_1]$.
\end{proposition}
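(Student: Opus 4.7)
The plan is to reduce everything to a homotopy computation on the universal cover of $M_1 \vee M_2$. Since $M_2$ is $(k-1)$-connected with $k \ge 2$, in particular $\pi_1(M_2) = 0$, so the Seifert--Van Kampen theorem gives $\pi_1(M_1 \vee M_2) = \pi_1(M_1) \ast \pi_1(M_2) = \pi_1(M_1)$; combined with Proposition~\ref{pro:pk} this proves the first assertion.

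For the $\pi_k$-computation I would first describe the universal cover $\widetilde{X}$ of $M_1 \vee M_2$ explicitly. Since $M_2$ is simply connected, its preimage in $\widetilde{X}$ consists of disjoint copies of $M_2$, one for each $g \in \pi_1$, attached to the universal cover $\widetilde{M_1}$ at the corresponding lift $g \cdot p_0$ of the wedge point. Thus $\widetilde{X}$ is obtained from $\widetilde{M_1}$ by wedging a copy $M_2^{(g)}$ of $M_2$ at each $g \cdot p_0$, and the deck action of $\pi_1$ restricts to its usual action on $\widetilde{M_1}$ while permuting these copies via $h \cdot M_2^{(g)} = M_2^{(hg)}$. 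Since $\pi_k(M_1 \vee M_2) = \pi_k(\widetilde{X})$ as $\Z[\pi_1]$-modules, it suffices to compute the right-hand side together with this action.

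The inclusion $\widetilde{M_1} \hookrightarrow \widetilde{X}$ admits the obvious retraction $r$ collapsing each $M_2^{(g)}$ to its basepoint, yielding a split short exact sequence
\[
0 \longrightarrow \ker(r_*) \longrightarrow \pi_k(\widetilde{X}) \longrightarrow \pi_k(\widetilde{M_1}) \longrightarrow 0
\]
and identifying the summand $A_1 = \pi_k(\widetilde{M_1}) = \pi_k(M_1)$ with its natural $\pi_1$-action. Equipping $M_2$ with a CW structure whose only cell of dimension below $k$ is the basepoint $0$-cell (possible since $M_2$ is $(k-1)$-connected), $\widetilde{X}$ is obtained from $\widetilde{M_1}$ by attaching cells of dimension $\ge k$, so the pair $(\widetilde{X}, \widetilde{M_1})$ is $(k-1)$-connected. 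Since $\widetilde{M_1}$ is simply connected, a Blakers--Massey type excision then yields $\pi_k(\widetilde{X}, \widetilde{M_1}) \cong \pi_k(\widetilde{X}/\widetilde{M_1}) = \pi_k(\bigvee_{g \in \pi_1} M_2^{(g)})$, valid in the range $k \le 2(k-1)$, which is exactly $k \ge 2$. Each $M_2^{(g)}$ being $(k-1)$-connected, the standard wedge-into-product splitting (applied after a compactness reduction to finite subwedges, since any class is represented by a map from a compact sphere meeting only finitely many summands) gives $\pi_k(\bigvee_{g} M_2^{(g)}) = \bigoplus_{g \in \pi_1} \pi_k(M_2)$; the Hurewicz isomorphism supplies $\pi_k(M_2) = H_k(M_2)$, producing $\ker(r_*) \cong \bigoplus_{g \in \pi_1} H_k(M_2) \cong \Z[\pi_1] \otimes_\Z H_k(M_2) = A_2$. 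The deck permutation of the wedge summands becomes, under this identification, exactly left multiplication of $\pi_1$ on the $\Z[\pi_1]$-factor.

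The main obstacle is the pair of excision/splitting arguments: both require precisely the dimensional inequality $k \ge 2$ together with the simple-connectedness of $M_2$, which the hypothesis furnishes. Once these identifications are in place, the retraction $r$ and the deck transformation action supply the claimed direct sum decomposition and its $\Z[\pi_1]$-module structure automatically.
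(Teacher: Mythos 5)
Your proof is correct, but it follows a different route from the paper's. The paper works directly with a representative map $f\colon S^k \to \widetilde{M_1 \vee M_2}$: it decomposes $S^k$ into finitely many domains each landing in $\widetilde{M_1}$ or in one copy $gM_2$, peels off the contribution of each such domain by subtracting a suitable element of $g\pi_k(M_2)$ determined by degrees on the $k$-cells of $M_2$, arrives at the decomposition $[f]=s_1+s_2$ by hand, and proves uniqueness by comparing images under the Hurewicz homomorphism in the two components of $H_k$ of the universal cover. You instead package the same computation in standard machinery: the equivariant retraction $r$ collapsing the copies of $M_2$ splits the long exact sequence of the pair $(\widetilde{X},\widetilde{M_1})$, Blakers--Massey excision identifies $\pi_k(\widetilde{X},\widetilde{M_1})$ with $\pi_k$ of the quotient wedge, and the wedge-versus-product comparison plus Hurewicz gives $A_2=\Z[\pi_1]\otimes_\Z H_k(M_2)$. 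Your version buys a cleaner justification of both the splitting and its uniqueness (the paper's uniqueness argument via Hurewicz is terser), at the cost of invoking excision; the paper's version is more elementary and makes the geometric meaning of the coefficients $d_{g,i}$ explicit. One small inaccuracy: the range you quote for the excision step, $k \le 2(k-1)$, is the one you would get if $\widetilde{M_1}$ were $(k-1)$-connected; with $\widetilde{M_1}$ merely simply connected the correct range is $j \le (k-1)+1 = k$, which is exactly what you need, so the conclusion stands but the cited inequality should be adjusted.
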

%%%%%%%%%%%%%%%%%%%%%%%%%%%%%%%%%%%%%%%%%%%%%%%%%%%%%%%%%%%%
%%%%%%%%%%%%%%%%%%%%%%%%%%%%%%%%%%%%%%%%%%%%%%%%%%%%%%%%%%%%
{\sc Proof of Proposition.} The universal cover $S^k \to \widetilde{M_1 \vee M_2}$ of $M_1 \vee M_2$ is the bouquet of the universal cover
$\widetilde{M_1}$ of $M_1$ with infinitely many copies of $M_2$ parameterized by elements of $\pi_1$.
In homotopical calculations we may assume that $M_2$ is a CW complex with one point, with no other cells of dimension $<k$ and with finitely many cells of dimension $k$ which are one-to-one correspondence with generators of $H_k(M_2)=\pi_k(M_2)$.

Let us take a map $f: S^k \to \widetilde{M_1 \vee M_2}$. By homotopy we reduce this map to a form when $S^k$ is decomposed into finitely many domains such that each domain is mapped into one of the copies of $M_2$ or into $\widetilde{M_1}$, and the boundaries of these domains are mapped into the point $p=M_1 \cap M_2$. If some domain $W$ is mapped into $gM_2$ then by adding to $[S^k]$
a certain element of $g\pi_k(M_2)$ we change the map $f$ to another one which  is homotopic to the map which is the same outside $W$ and $W$ is mapped into $p=M_1 \cap M_2$. Algebraically that means that $[f]$ is replaced by $[f]-g a_g$ where $a_g \in H_k(M_2)=\pi_k(M_2)$. This element $g a_g$ is as follows. Let the $k$-skeleton of $M_2$ is the bouquet of $k$-spheres $S_1^k,\dots,S_m^k$. We denote by $gS^k_1,\dots,gS^k_m$ the corresponding  spheres in the copy of $M_2$ marked by $g \in \pi_1$. Let $d_{g,i} =
\deg [S^k \stackrel{f}{\rightarrow} gS^k_i], i=1,\dots,m$. Then
$$
g a_g = \sum_{i=1}^m d_{g,i} [gS^k_i].
$$
By successively applying this procedure we represent $[f]$ as the sum
$$
[f] = s_1 + s_2, \ \ s_1 \in \pi_k(M_1), \ \ s_2 \in \Z[\pi_1] \otimes_\Z H_k(M_2).
$$
This decomposition is clearly unique because $s_2$ realizes via the Hurewicz homomorphism nontrivial elements in a component of $H_k$ on which $\pi_1$ acts by multiplications, and the Hurewicz homomorphism maps $s_1$ into another component of $H_k$.
The actions of $\pi_1$ on $A_1$ and $A_2$ are clear.
Proposition~\ref{pro:wedge} is established.

Let $h \in \pi_1(M)$ be realized by a loop $\omega$ based at $p \in M$ and $\Lambda[h](M,p)$ be the space of $S^1$-maps into $M$ freely homotopic to $h$. By
\cite[Abschn.2]{Ba} or
\cite[Lem.1]{T1985}, the exact homotopy sequence for the Serre fibration
$$
\Lambda[h](M) \stackrel{\Omega[h](M)}{\longrightarrow} M
$$
which maps the loop into the marked point includes the following piece:
$$
\pi_k(M) \stackrel{h_\ast-1}{\longrightarrow} \pi_{k-1}(\Omega(M)) = \pi_k (M) \to \pi_{k-1}\Lambda[h](M)
$$
where $h_\ast: \pi_k \to \pi_k$ is the action of $h \in \pi_1$ on the $k$-th homotopy group $\pi_k$.
Hence if $(h_\ast-1)$ is not an epimorphism then the group $\pi_{k-1} \Lambda[h](M)$ is nontrivial.
In special case $\pi_1=\Z$ that was found by Bangert--Hingston who proved that

{\it if in this situation the powers of $h$ induce different free homotopy classes then the number $N(t)$ of simple closed geodesics of length $\leq t$ grows as
$$
C \frac{t}{\log t} \ \ \ \mbox{for some constant $C>0$}
$$
and moreover such an inequality holds if only non-contractible simple closed geodesics are counted.}

Let us prove that under assumptions of
Theorem~\ref{thm:conn-intro}
$h_\ast-1: \pi_k(M)\to \pi_k(M)$ is not an epimorphism.
We do that by contradiction. Let $(h_\ast-1)$ is an epimorphism which implies, by Proposition~\ref{pro:pk}, that
the multiplication
$$
\times(h-1): \Z[\pi_1]\otimes_\Z H_k(M_2) \to \Z[\pi_1]\otimes_\Z H_k(M_2)
$$
is an epimorphism. Let us take a generator $\gamma$ of $H_k(M_2)=\pi_k(M_2)$.
Denote by the same symbol, a generator of $\pi_k(M)$ which is realized by
the corresponding map $S^k \to \widetilde{M}$ to a fixed $M_2$-component of the bouquet.
If $\gamma \in (h_\ast-1)\pi_k(M)$, then there exists and element
$$
q\gamma, \ \ \ q \in \Z[\pi_1],
$$
such that $(h-1)q \gamma = \gamma$, and hence
$$
(h-1)q = 1
$$
in $\Z[\pi_1]$ if $\gamma$ is of infinite order, or in $\Z_N[\pi_1]$ if $\gamma$ is of order $N < \infty$.
Take the homomorphism of $\pi_1$ into $\pi_1/[\pi_1,\pi_1]$ which induces the homomorphism of the group rings
and derive that
$$
([h]-1)[q] = 1 \ \ \mbox{in $\Z[H_1]$, or in $\Z_N[H_1]$}.
$$
If $H_1(M)$ has a torsion we may again transfer to another ring $\Z[H_1/\mathrm{Torsion}] = \Z[u,u^{-1}]$, or to
$\Z_N[H_1/\mathrm{Torsion}] = \Z_N[u,u^{-1}]$ and derive the equality
$$
(u-1)\bar{q} = 1
$$
which has to hold in this ring.
Let
$$
\bar{q} = a_r u^r + \dots + a_s u^s, -\infty < r \leq s < \infty.
$$
Then
$$
(u-1)\bar{q} = -a_ru^r + \dots a_s u^{s+1} \neq 1,
$$
and we arrive at contradiction. Hence $\gamma \notin (h_\ast-1)\pi_k(M)$.
Now we apply arguments from \cite{BH} to derive the conclusion.
\end{proof}
%%%%%%%%%%%%%%%%%%%%%%%%%%%%%%%%%%%%%%%%%%%%%%%%%%%%%%%%%%%%
%%%%%%%%%%%%%%%%%%%%%%%%%%%%%%%%%%%%%%%%%%%%%%%%%%%%%%%%%%%%

\section{A remark on closed geodesics on connected sums of manifolds}
\label{sec:remark}

%\begin{remark}
%\label{rem:connected-sum2}
%\rm
For a connected sum of two closed manifolds $M_1 \# M_2$
of dimension $n \ge 3$ which are not homeomorphic to the
$n$-sphere
it is known that
every metric on it has infinitely many
geometrically distinct closed geodesics if

\begin{itemize}

\item[(a)]
one of them, say $M_2$ is simply-connected, and the fundamental group of
another one, $\pi_1(M_1)$, is finite;

\item[(b)]
one of them, say $M_2$, is simply-connected
and $M_1$ has
has positive first Betti number,
see Theorem~\ref{thm:conn-intro} Part(a);

\item[(c)]
both $M_1$ and $M_2$ are not simply-connected,
see Theorem~\ref{thm:conn-intro} Part(b).
\end{itemize}

The remaining case for which there is no
positive answer is as follows:
{\sl
$M_2$ is simply-connected and $\pi_1(M_1)$ is infinite with
$b_1(M_1)=0.$}

We have to justify only on the case (a), because two other cases follow from Theorem~\ref{thm:conn-intro}.

By the Gromoll--Meyer theorem~\cite[Thm.4]{GM},
if the sequence $(b_j(\Lambda M,k))_{j \in \n}$
of Betti numbers of the free loop space
$\Lambda M$ is unbounded then there are infinitely many geometrically distinct
closed geodesics for any Riemannian or Finsler metric.

Moreover if there is a field $k$ such that
the cohomology ring $H^\ast(M;k)$ of $M$ over $k$ has at least two generators, then
the sequence $(b_j(\Lambda M,k))_{j \in \n}$ is unbounded (first this result  was shown for $k=\Q$ the rational field
in~\cite{VS}, and later extended onto all fields of coefficients in \cite{La}).

To derive the case (a) from these results we prove

\begin{proposition}
If $M_1$ and $M_2$ are simply-connected $n$-dimensional manifolds non-homeomorphic to the $n$-sphere, then there is a field $k$ equal to $\Z_p$ for some prime $p$ or to $\Q$ such that the cohomology ring
$H^\ast(M_1 \#M_2;k)$ has at least two generators.
\end{proposition}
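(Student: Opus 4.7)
My plan is to compute $H^*(M_1\#M_2;k)$ as a ring and then choose the field $k$ based on the torsion profile of each factor. First, a Mayer--Vietoris calculation for the decomposition $M_1\#M_2=(M_1\setminus D^n)\cup_{S^{n-1}}(M_2\setminus D^n)$ gives $H^i(M_1\#M_2;k)\cong H^i(M_1;k)\oplus H^i(M_2;k)$ for $0<i<n$ and $H^n(M_1\#M_2;k)\cong k$ (orientability is automatic for simply-connected manifolds). Classes pulled back via the two collapse maps $M_1\#M_2\to M_j$ admit representative cochains with disjoint supports, so any cup product of a positive-degree class from the $M_1$-side with a positive-degree class from the $M_2$-side vanishes. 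Consequently, if both $\widetilde H^{<n}(M_1;k)$ and $\widetilde H^{<n}(M_2;k)$ are nonzero, then the two sides contribute independent indecomposable elements, and $H^*(M_1\#M_2;k)$ requires at least two algebra generators; if only $M_1$ has nonzero reduced intermediate cohomology, then $H^*(M_1\#M_2;k)\cong H^*(M_1;k)$ as rings.

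Next I would record a general observation: a simply-connected closed $n$-manifold which is a $k$-cohomology sphere for $k=\Q$ and for every $k=\z_p$ is, by the universal coefficient theorem, an integer homology sphere, hence homotopy equivalent to $S^n$ by Hurewicz and Whitehead, hence homeomorphic to $S^n$ by the topological Poincar\'e conjecture. Thus each of $M_1,M_2$ fails to be a $k$-cohomology sphere for some $k$. Moreover, UCT shows that a manifold with nontrivial rational intermediate cohomology also has nontrivial $\z_p$-intermediate cohomology for every prime $p$.

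The proof then splits into cases. If at least one of $M_1,M_2$ is not a rational homology sphere, say $M_1$, then either $M_2$ is also not a rational homology sphere---in which case take $k=\Q$---or $M_2$ is a rational homology sphere and hence has some $p$-torsion (since $M_2$ is not homeomorphic to $S^n$), and then both factors have nontrivial $\z_p$-intermediate cohomology, so $k=\z_p$ works. The delicate case is when both $M_1,M_2$ are rational homology spheres with torsion. If the torsion subgroups share a common prime $p$, take $k=\z_p$. Otherwise fix a prime $p$ dividing the torsion of $M_1$ but not that of $M_2$; then $M_2$ is a $\z_p$-cohomology sphere and $H^*(M_1\#M_2;\z_p)\cong H^*(M_1;\z_p)$, so one must show that $H^*(M_1;\z_p)$ already has at least two algebra generators.

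This last assertion is the main obstacle. I would argue by contradiction: suppose $H^*(M_1;\z_p)=\z_p[x]/(x^{m+1})$ with $|x|=d$ and $md=n$. Simple connectedness (via $b_1(M_1;\z_p)=0$) forces $d\ge 2$, and the presence of $p$-torsion forces $m\ge 2$. Let $s_i$ denote the number of $\z/p^a$-summands of $H_i(M_1;\z)$. Because $M_1$ is a rational homology sphere, UCT yields $\dim_{\z_p}H^i(M_1;\z_p)=s_i+s_{i-1}$ for $0<i<n$. Inductively, $b_i=0$ for $1\le i\le d-1$ forces $s_i=0$ for $i\le d-1$; then $b_d=1$ gives $s_d=1$, but $b_{d+1}=0$ (since $d+1<n$ and $d+1$ is not a multiple of $d$ when $d\ge 2$) forces $s_{d+1}=-1$, which is absurd. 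Hence $H^*(M_1;\z_p)$ has at least two algebra generators, concluding the proof.
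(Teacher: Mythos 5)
Your proof is correct and takes essentially the same route as the paper: the decisive ingredient in both is the universal-coefficient/Ext observation that a simply-connected rational homology sphere with $p$-torsion has nonzero $\Z_p$-cohomology in two consecutive intermediate degrees, which is incompatible with a cohomology ring on a single generator, combined with the vanishing of cross cup products in a connected sum. The paper isolates this as a lemma about a single manifold and exhibits the two indecomposable generators in degrees $k$ and $k+1$ directly, whereas you reach the same conclusion by contradiction against $\Z_p[x]/(x^{m+1})$ and spell out the connected-sum casework that the paper leaves as ``straightforwardly derived''; the substance is the same.
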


This proposition is straightforwardly derived from the following

\begin{lemma}
Let $X$ be an $n$-dimensional simply-connected manifold non-homeomorphic to
the $n$-sphere.
Then at least one of two possibilities holds:

a) $X$ is not a rationally homological sphere, i.e., there is $k<n$ such that
$H^k(X)$ is infinite;

b) the cohomology ring $H^\ast(X;\Z_p)$ has at least two generators
for some prime $p$.
\end{lemma}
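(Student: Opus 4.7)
The plan is to argue by contradiction: if (a) fails then (b) must hold. Suppose $H^k(X;\Q) = 0$ for $0 < k < n$. Since $X$ is simply-connected it is orientable, so by Poincar\'e duality $H^n(X;\Z) \cong \Z$ and each intermediate integer cohomology group is finite. If all of them were trivial, $X$ would be an integer homology sphere and hence homeomorphic to $S^n$ by the generalized Poincar\'e conjecture (Smale in dimensions $\ge 5$, Freedman in dimension $4$, Perelman in dimension $3$, classical in lower dimensions). Because $X$ is not homeomorphic to $S^n$, some $H^k(X;\Z)$ with $0 < k < n$ has $p$-primary torsion for a prime $p$, and the universal coefficient theorem then produces a non-trivial class in $H^k(X;\Z_p)$; in particular $H^*(X;\Z_p) \not\cong H^*(S^n;\Z_p)$.

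Suppose for contradiction that $H^*(X;\Z_p)$ is generated as a graded $\Z_p$-algebra by a single element $\alpha$ of positive degree $d$. Since the ring is finite-dimensional and has a non-trivial top class in degree $n$ by Poincar\'e duality mod $p$, it must be $\Z_p[\alpha]/(\alpha^{m+1})$ with $md = n$ and $m \ge 1$. I will show that $m = 1$, giving $H^*(X;\Z_p) \cong H^*(S^n;\Z_p)$ and contradicting the previous paragraph. Two easy cases dispose of most parities. If $p$ is odd and $d$ is odd, graded commutativity yields $\alpha^2 = -\alpha^2$, hence $\alpha^2 = 0$ and $m = 1$. If $d$ is even, then $n$ is even and for an orientable rational homology $n$-sphere $\chi(X) = 1 + (-1)^n = 2$, while the truncated polynomial ring is concentrated in the even degrees $0, d, \dots, md$, each of dimension one, giving $\chi(X) = m + 1$; again $m = 1$.

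The main obstacle is the remaining case $p = 2$ with $d$ odd. Here graded commutativity gives nothing, and when $n$ is odd the equality $\chi(X) = 0$ is consistent with every odd $m$, so one needs Steenrod operations. Since $X$ is simply-connected, $H^1(X;\Z_2) = 0$, so $d \ne 1$, which forces $d \ge 3$ and in particular $d$ is not a power of $2$. By the classical theorem that $\mathrm{Sq}^d$ is indecomposable in the mod-$2$ Steenrod algebra only when $d$ is a power of $2$, one may write $\mathrm{Sq}^d = \sum_i \mathrm{Sq}^{a_i}\mathrm{Sq}^{b_i}$ with $0 < a_i, b_i < d$. For each such $b$, the class $\mathrm{Sq}^b(\alpha)$ lies in $H^{d+b}(X;\Z_2)$; but $d < d+b < 2d$ falls strictly between two consecutive non-zero degrees of the truncated polynomial algebra, so $H^{d+b}(X;\Z_2) = 0$ and $\mathrm{Sq}^b(\alpha) = 0$. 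Hence $\alpha^2 = \mathrm{Sq}^d(\alpha) = 0$, so $m = 1$ and the contradiction is complete.
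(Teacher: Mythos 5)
Your proof is correct, but it takes a genuinely different and considerably heavier route than the paper's. The paper applies the Hurewicz theorem to locate the \emph{first} non-trivial homology group $H_k(X)$ with $1<k<n$, observes that it is finite, picks a prime $p$ dividing its order, and then reads off from the universal coefficient sequence that \emph{both} $H^k(X;\Z_p)$ and $H^{k+1}(X;\Z_p)$ are non-zero (the latter via the term $\mathrm{Ext}(H_k(X),\Z_p)$). Non-trivial cohomology in two consecutive degrees $k$ and $k+1$ with $k\ge 2$ immediately rules out a single ring generator, since a monogenic graded algebra is concentrated in degrees divisible by the generator's degree, and $\gcd(k,k+1)=1$ would force a generator in degree $1$, impossible for a simply-connected space. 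You instead take an arbitrary intermediate degree carrying $p$-torsion and then classify monogenic mod-$p$ Poincar\'e duality algebras, using graded commutativity, the Euler characteristic, and --- in the only hard case $p=2$ with $d$ odd --- the decomposability of $\mathrm{Sq}^d$ for $d$ not a power of $2$. This is all correct (the reduction to $d\ge 3$ via simple connectivity is exactly what makes the Steenrod step work, and your case split is exhaustive), but it imports the Adem relations where the paper needs only the universal coefficient theorem. What your approach buys is a stronger intermediate statement --- any closed simply-connected rational homology $n$-sphere whose mod-$p$ cohomology ring is singly generated has the mod-$p$ cohomology ring of $S^n$ --- whereas the paper's trick is tied to the lowest non-vanishing homology group; what it costs is the machinery of Steenrod operations for a lemma that can be settled by elementary homological algebra.
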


{\sc Proof of Lemma.} Let us assume that $X$ is a rationally homological sphere, i.e., a)
does not hold. By the Hurewicz theorem, there is $k$ such that $k>1$,
$H_i(X) = 0$ for $0<i<k$, and $H_k(X) \neq 0$.
Since $X$ is not homeomorphic to the sphere, $k<n$ and, since
$H_\ast(X;\Q) = H_\ast(S^n;\Q)$, $H_k(X)$ is finite. Let us take a prime $p$ such that the order of
$H_k(X)$ is divisible by $p$ and therefore $H_k(X;\Z_p)\neq 0$. Since $\Z_p$ is a field, the groups
$H_i(X;\Z_p)$ and $H^i(X;\Z_P)$ are dual for all $i$ and we conclude that $H^k(X;\Z_p) \neq 0$.
By theorem on universal coefficients, we have
the exact sequence
$$
0 \to \mathrm{Ext}(H_k(X),\Z_p) \to H^{k+1}(X;\Z_p) \to \mathrm{Hom}(H_{k+1}(X),\Z_p) \to 0.
$$
Since $H_k(X)$ has a $p$-torsion, $\mathrm{Ext}(H_k(X),\Z_p) \neq 0$, the exact sequence implies that $H^{k+1}(X;\Z_p) \neq 0$, and we derive that
$H^\ast(X;\Z_p)$ has at least two generators of dimensions $k$ and $k+1$ which implies that b) holds.
Lemma is proved.

Hence, we conclude that on a connected sum of simply-connected manifolds there infinitely many geometrically distinct closed geodesics for all Riemannian and Finsler metrics.

If $M=M_1 \# M_2$, $\pi_1(M_1)$ is finite, and $\pi_1(M_2)=0$, then the universal cover
$\widetilde{M}$ is homeomorphic to the connected sum of the universal cover $\widetilde{M_1}$ of
$M_1$ and $m$ copies of $M_2$ where $m$ is the order of $\pi_1(M_1)$. Since the universal covering
$\widetilde{M} \to M$ is $m$-sheeted the existence of infinitely many geometrically distinct closed geodesics on $\widetilde{M}$ implies the same conclusion for $M$. Therefore the case (a) is justified.

%\end{remark}
%%%%%%%%%%%%%%%%%%%%%%%%%%%%%%%%%%%%%%%%%%%%%%%%%%%%%%%%%%%%
%%%%%%%%%%%%%%%%%%%%%%%%%%%%%%%%%%%%%%%%%%%%%%%%%%%%%%%%%%%%
%%%%%%%%%%%%%%%%%%%%%%%%%%%%%%%%%%%%%%%%%%%%%%%%%%%%%%%%%%%%
%%%%%%%%%%%%%%%%%%%%%%%%%%%%%%%%%%%%%%%%%%%%%%%%%%%%%%%%%%%%
 %%%%%%%%%%%%%%%%%%%%%%%%%%%%%%%%%%%%%%%%%%%%%%%%%%%%%%%%%%%
\section{Closed geodesics on $3$-manifolds}
\label{sec:three}
%%%%%%%%%%%%%%%%%%%%%%%%%%%%%%%%%%%%%%%%%%%%%%%%%%%%%%%%%%%
%%%%%%%%%%%%%%%%%%%%%%%%%%%%%%%%%%%%%%%%%%%%%%%%%%%%%%%%%%%
We recall the following statements about manifolds
of dimension $3,$ a general reference is the book~\cite{AFW}
by Aschenbrenner, Friedl and Wilton.
An orientable $3$-manifold is called {\em prime} if it cannot
be decomposed as a non-trivial connected sum.
Hence if $ M= M_1 \# M_2$ then either $M_1$ or $M_2$ is
diffeomorphic to $S^3.$
A $3$-manifold $M$ is called {\em irreducible} if every
embedded $2$-sphere in $M$ bounds a $3$-ball.
Conversely an orientable prime $3$-manifold is
either irreducible or diffeomorphic to $S^1\times S^2.$
Note that we consider only manifolds {\em without boundary,}
whereas in reference \cite{AFW} manifolds in general may have
boundary.
One obtains the following decomposition result,
cf.~\cite[Thm.3.0.1]{AFW}:
%%%%%%%%%%%%%%%%%%%%%%%%%%%%%%%%%%%%%%%%%%%%%%%%%%%%%%%%%
%%%%%%%%%%%%%%%%%%%%%%%%%%%%%%%%%%%%%%%%%%%%%%%%%%%%%%%%%
\begin{proposition}
\label{pro:decomposition}
Let $M$ be a compact, orientable $3$-manifold.
Then $M$ admits a decomposition
\begin{equation}
\label{eq:decomposition}
M=S_1 \# \cdots \# S_k\#T_1\#\cdots \#T_l\#N_1\#
\cdots\# N_m\,;\,k,l,m \in \n_0
\end{equation}
as a connected sum of orientable prime $3$-manifolds.
Here
\begin{itemize}
\item[(a)] $S_1,\ldots, S_k$ are the prime components of $M$
with finite fundamental group, these are spherical
spaces.
\item[(b)] $T_1,\ldots, T_l$ are
the prime components of $M$ with infinite solvable
fundamental group. Hence $T_i, i=1,\ldots, l$ is either diffeomorphic
to $S^1\times S^2,$ or it has a finite solvable cover
which is a torus bundle.
\item[(c)] $N_1, \ldots, N_m$ are the prime
components of $M$ whose fundamental group
is neither
finite nor solvable.
$N_j, j=1,\ldots,m$
is either hyperbolic, or finitely covered by an
$S^1$-bundle over a surface $\Sigma$ with
negative Euler characteristic
$\chi(\Sigma)<0,$ or has
a non-trivial geometric decomposition.
\end{itemize}
\end{proposition}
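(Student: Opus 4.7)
The plan is to combine the Kneser--Milnor prime decomposition theorem with the Geometrization Theorem of Thurston and Perelman, and then sort the resulting prime pieces according to the nature of their fundamental groups. First, Kneser--Milnor yields a decomposition $M=P_1\#\cdots\#P_n$ into orientable prime $3$-manifolds, unique up to order and insertion of $S^3$-summands; each $P_i$ is either irreducible or diffeomorphic to $S^1\times S^2$, the latter being the unique orientable prime $3$-manifold that fails to be irreducible.

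I would then partition the list $\{P_1,\ldots,P_n\}$ according to whether $\pi_1(P_i)$ is finite, infinite solvable, or neither. For the spherical case (a), Perelman's elliptization theorem (the positive-curvature part of Geometrization) implies that every closed orientable irreducible $3$-manifold with finite fundamental group is diffeomorphic to a quotient $S^3/\Gamma$ with $\Gamma\subset SO(4)$ acting freely, yielding the spherical summands $S_1,\ldots,S_k$.

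For case (b), if $P_i\cong S^1\times S^2$ there is nothing more to show (and $\pi_1(P_i)\cong\Z$ is solvable); otherwise $P_i$ is irreducible with infinite solvable fundamental group. By Geometrization such a $P_i$ is modelled on one of the three solvable $3$-dimensional geometries $\mathbb{E}^3$, Nil or Sol, and Bieberbach's theorem in the flat case, the standard structure theory of nilmanifolds in the Nil case, and the description of Sol manifolds as mapping tori of Anosov automorphisms of $T^2$ together imply that $P_i$ has a finite solvable cover which is a torus bundle. For case (c), Geometrization shows that a prime $3$-manifold with infinite non-solvable fundamental group is either hyperbolic, or Seifert fibered over a $2$-orbifold of negative Euler characteristic (hence finitely covered by a circle bundle over a closed surface $\Sigma$ with $\chi(\Sigma)<0$), or admits a non-trivial geometric decomposition along incompressible tori into pieces of the two previous kinds.

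The main obstacle is purely the depth of the underlying theorems: Kneser--Milnor prime decomposition, Perelman's elliptization, Thurston's hyperbolization theorem for Haken manifolds, and the JSJ decomposition. Once these are accepted as black boxes, the argument reduces to a routine case analysis on the isomorphism type of $\pi_1(P_i)$, and the statement is in fact recorded verbatim as \cite[Thm.~3.0.1]{AFW}, so the proof can simply appeal to that reference.
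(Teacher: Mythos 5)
Your proposal is correct and matches the paper's treatment: the paper offers no independent proof of this proposition but simply cites \cite[Thm.~3.0.1]{AFW}, which is exactly the reference you invoke, and your sketch of how Kneser--Milnor plus Geometrization yields the three classes of prime summands is the standard argument underlying that theorem. Nothing further is needed.
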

%%%%%%%%%%%%%%%%%%%%%%%%%%%%%%%%%%%%%%%%%%%%%%%%%%%%%%%%%%%%
%%%%%%%%%%%%%%%%%%%%%%%%%%%%%%%%%%%%%%%%%%%%%%%%%%%%%%%%%%%%%
It follows that the fundamental group
$\pi_1(M)$ is the free product of the fundamental groups
of the prime components, i.e.
\begin{equation*}
\pi_1(M)=\pi_1(S_1)*\cdots *\pi_1(S_k)*
\pi_1(T_1 )*\cdots *\pi_1(T_l)*
\pi_1(N_1)*\cdots *\pi_1(N_m)\,,
\end{equation*}
cf. Equation~\eqref{eq:fund-group-conn-sum}.
%%%%%%%%%%%%%%%%%%%%%%%%%%%%%%%%%%%%%%%%%%%%%%%%%%%%%%
%%%%%%%%%%%%%%%%%%%%%%%%%%%%%%%%%%%%%%%%%%%%%%%%%%%%%%%
\begin{definition}
Let $M$ be a smooth manifold. The {\em first virtual
Betti number} $vb_1(M)$ is defined as the maximal
first Betti number
$b_1(\widetilde{M})=\rk_{\z}H_1(\widetilde{M};\z)$ of a finite covering $\widetilde{M},$
i.e. $vb_1(M)\in \z^{\ge 0}\cup \{\infty\}$ with
$$vb_1(M):=\sup \{k \in \z; b_1(\widetilde{M})\ge k,
\widetilde{M}\longrightarrow M
\mbox{ is a finite covering}\}.$$
\end{definition}
%%%%%%%%%%%%%%%%%%%%%%%%%%%%%%%%%%%%%%%%%%%%%%%%%%%%%%
%%%%%%%%%%%%%%%%%%%%%%%%%%%%%%%%%%%%%%%%%%%%%%%%%%%%%%%%
The following statement was known
in the literature as the
{\em virtual infinite Betti number conjecture.}
It was solved due to solution of
the geometrization conjecture by Perelman,
cf.~\cite[Thm.1.7.6]{AFW}
and the work by
Agol, Kahn-Markovich and Wise
leading to the proof of the {\em virtually
compact special theorem}~\cite[Thm. 4.2.2]{AFW}:
%%%%%%%%%%%%%%%%%%%%%%%%%%%%%%%%%%%%%%%%%%%%%%%%%%%%%%%%
\begin{proposition}{\cite[flowchart p.46, Cor.4.2.3]{AFW}}
\label{pro:infinite-virtual}
If $M$ is a compact, irreducible,
orientable $3$-manifold
%(without boundary)
such that the fundamental group is neither
finite nor solvable. Then
the virtual first Betti number is infinite, i.e.
$vb_1(M)=\infty.$
\end{proposition}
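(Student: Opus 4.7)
The plan is to use the trichotomy in Proposition~\ref{pro:decomposition}(c): since $M$ is compact, irreducible and orientable with infinite, non-solvable fundamental group, $M$ falls into exactly one of the classes (i) hyperbolic, (ii) finitely covered by an $S^1$-bundle over a surface $\Sigma$ with $\chi(\Sigma)<0$, or (iii) admitting a non-trivial geometric (JSJ) decomposition. In each case I would produce, separately, a sequence of finite covers of $M$ with first Betti number tending to infinity.

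Case (ii) is the most elementary and I would handle it first. After passing to the finite cover $\widetilde{M}$ that is a genuine circle bundle over $\Sigma$, the Gysin sequence gives $b_1(\widetilde{M}) \ge b_1(\Sigma) - 1$. Since $\pi_1(\Sigma)$ is a non-elementary Fuchsian group, Selberg's lemma together with residual finiteness of surface groups provides finite covers $\Sigma' \to \Sigma$ of arbitrarily large genus; pulling each such cover back through the bundle projection yields a finite cover of $M$ of the corresponding arbitrarily large $b_1$.

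Cases (i) and (iii) both reduce to the chain of landmark results usually summarised as the \emph{virtually compact special theorem}. For case (i) the inputs are Kahn--Markovic (existence of closed essential quasi-Fuchsian surfaces in any closed hyperbolic $3$-manifold), Bergeron--Wise (cubulation of $\pi_1(M)$ by such surfaces), Agol's virtual Haken theorem (the resulting cube complex is virtually special), and Agol's fibering criterion (every RFRS group has a finite-index subgroup fibering over $\mathbb{Z}$). For case (iii) the analogous machinery applies to graph manifolds with at least one hyperbolic JSJ piece and, via Liu and Przytycki--Wise, to pure graph manifolds not finitely covered by a closed Sol-manifold; the Sol case is ruled out here because $\pi_1(M)$ is not solvable. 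In both cases the outcome is a finite cover $\widetilde{M}\to M$ fibering over $S^1$ with fibre a surface $F$ of genus $\ge 2$. Pulling back finite covers of $F$ through that fibration then produces finite covers of $M$ of unbounded $b_1$, and hence $vb_1(M)=\infty$.

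The main obstacle here is not technical but bibliographical: every step relies on a landmark theorem, so the ``proof'' I am describing is really just the organisation recorded in the flowchart on p.~46 and Corollary~4.2.3 of \cite{AFW}. A genuinely self-contained argument would have to redevelop the work of Kahn--Markovic, Wise and Agol, which is well beyond what a sketch can reasonably attempt; my plan is instead to cite these results and to verify that the case distinction in Proposition~\ref{pro:decomposition}(c) matches exactly the hypotheses they require.
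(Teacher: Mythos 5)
Your trichotomy (hyperbolic / virtually an $S^1$-bundle over a hyperbolic surface / non-trivial geometric decomposition) is the same one the paper extracts from the Geometrization Theorem, and in the hyperbolic case both arguments bottom out in the same citation, the virtually compact special theorem of Agol, Kahn--Markovic and Wise as packaged in \cite[Cor.~4.2.3]{AFW}. The treatments diverge on the other two branches. In the Seifert-fibred case you argue directly: pass to the $S^1$-bundle cover, pull back finite covers of the base surface of large genus, and read off $b_1$ from the Gysin sequence; the paper instead only observes that this cover contains an incompressible torus and folds the case into the toroidal one. For the toroidal case the paper cites Luecke's 1988 theorem on finite covers of $3$-manifolds containing essential tori (using that $\pi_1$ is neither finite nor solvable), which is a much older and lighter input than the Przytycki--Wise/Liu machinery you invoke; your route is correct but deploys the modern cubulation results where they are not needed. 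One step of yours should not stand as written: from a finite cover $\widetilde{M}$ fibering over $S^1$ with fibre $F$, ``pulling back finite covers of $F$'' does not directly produce finite covers of $\widetilde{M}$ (the monodromy must lift to the cover of $F$), and even when it does, the resulting mapping torus has $b_1 = 1 + \dim\ker(\phi_*-1)$ on $H_1$ of the new fibre, which need not be large. Deducing $vb_1=\infty$ from virtual specialness goes through largeness or the RFRS property, not through virtual fibering alone; since \cite[Cor.~4.2.3]{AFW} already asserts $vb_1=\infty$ outright, you should cite that conclusion directly rather than re-derive it via the fibration.
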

%%%%%%%%%%%%%%%%%%%%%%%%%%%%%%%%%%%%%%%%%%%%%%%%%%%%%%%%
The statement and the
involved arguments can be read off from
the flowchart~\cite[p.46]{AFW}
together with the
corresponding
{\em Justifications}~\cite[3.2]{AFW}
and~\cite[Cor.4.2.3]{AFW}:
The Geometrization Theorem~\cite[Thm.1.7.6]{AFW} implies that
any compact, orientable, irreducible $3$-manifold
(with empty boundary)
is Seifert fibred, hyperbolic or admits an incompressible
torus. If the manifold is hyperbolic then
$vb_1(M)=\infty.$
This is a consequence of
the above mentioned {\em virtually
compact special theorem}~\cite[Thm. 4.2.2]{AFW}
due to Agol \cite{Agol}, Kahn-Markovich and Wise
and is stated in~\cite[Cor.4.2.3]{AFW}

If the manifold is Seifert fibred it is finitely covered
by an $S^1$-bundle $\widetilde{M}$
over a surface $F$ which has negative
Euler characteristic
$\chi(F)<0$
since by assumption the fundamental
group $\pi_1(M)$ is neither solvable nor finite.
Therefore the covering $\widetilde{M}$ admits an
{\em incompressible torus.}
Hence we are left with the case that
a covering $\widetilde{M}$ of $M$ contains an incompressible torus
$T \subset \widetilde{M}.$
Since $\pi_1(\widetilde{M})$ is neither finite nor solvable
we conclude from~\cite[Thm.1.1]{Lue} that
$vb_1(M)=\infty.$
The statement of Proposition~\ref{pro:infinite-virtual}
can also be found in~\cite[Thm.1.1]{LN}.
%%%%%%%%%%%%%%%%%%%%%%%%%%%%%%%%%%%%%%%%%%%%%%%%%%%%%%%%%

%%%%%%%%%%%%%%%%%%%%%%%%%%%%%%%%%%%%%%%%%%%%%%%%%%%%%%%%%
It is easy to see that a compact Riemannian
manifold with
virtual Betti number at least $2$ carries infinitely
many closed geodesics:
%%%%%%%%%%%%%%%%%%%%%%%%%%%%%%%%%%%%%%%%%%%%%%%%%%%%%%%%%%
\begin{proposition}
\label{pro:virtual-number}
Let $M$ be a compact Riemannian manifold with
a Riemannian or Finsler metric.
If the virtual Betti number satisfies $vb_1(M)\ge 2$
then for any $k \le vb_1(M)$
the number $N(t)$ of geometrically distinct closed geodesics
of length $\ge t$ satisfies:
%\begin{equation*}
$\liminf_{t\to \infty}N(t)/t^k>0\,.$
%\end{equation*}
In particular there are infinitely many geometrically
distinct closed geodesics.
\end{proposition}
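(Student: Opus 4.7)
The plan is to lift the counting problem to a finite cover and exploit the free abelian quotient of its fundamental group. It suffices to treat the case $k\ge 2$, since for $k=1$ the conclusion is weaker than the $k=2$ case and $vb_1(M)\ge 2$ by hypothesis. Choose a finite cover $p\colon\widetilde M\to M$ with $d$ sheets and $b_1(\widetilde M)\ge k$, which exists by the definition of $vb_1$. The Hurewicz map composed with projection modulo torsion and, if necessary, onto $k$ coordinates yields a surjection $\phi\colon\pi_1(\widetilde M)\twoheadrightarrow\Z^k$; this will be the algebraic engine of the proof.

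Next, I fix $\alpha_1,\dots,\alpha_k\in\pi_1(\widetilde M)$ whose $\phi$-images form a basis of $\Z^k$, and represent each $\alpha_i$ by a based loop on $\widetilde M$ of length at most some constant $L_0$. For $v=(n_1,\dots,n_k)\in\Z^k$ with $\max|n_i|\le R$, put $g_v:=\alpha_1^{n_1}\cdots\alpha_k^{n_k}$, so that $\phi(g_v)=v$ and the concatenation of the corresponding loops represents $g_v$ by a closed curve of length at most $kL_0R$. If $v$ is \emph{primitive}, i.e.\ $\gcd(n_1,\dots,n_k)=1$, then $g_v$ is primitive in $\pi_1(\widetilde M)$: an equation $g_v=h^m$ with $m\ge 2$ would force $v=m\,\phi(h)$, contradicting primitivity of $v$.

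In each free homotopy class $[g_v]$ there is a shortest closed curve, a closed geodesic $\widetilde c_v$ on $\widetilde M$ of length at most $kL_0R$; since $[g_v]$ is a primitive conjugacy class, $\widetilde c_v$ is a prime closed geodesic. Two primitive $v,v'$ with $v'\ne\pm v$ have distinct $\phi$-images and therefore yield geometrically distinct prime geodesics, so the inevitable $\pm v$ identification costs only a factor of two. A standard M\"obius-inversion count shows that for $k\ge 2$ the number of primitive $v\in\Z^k$ with $|v|_\infty\le R$ is at least $c_kR^k$ for some $c_k>0$ and all sufficiently large $R$, producing $N_{\widetilde M}(kL_0R)\ge(c_k/2)R^k$ prime closed geodesics on the cover.

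The final step pushes these down to $M$. A prime closed geodesic $\widetilde c$ on $\widetilde M$ of length $L$ projects to some iterate $c_0^m$ of a prime closed geodesic $c_0$ on $M$ with $\mathrm{length}(c_0)=L/m\le L$, and the induced map $\widetilde c\mapsto c_0$ is at most $d$-to-one, since the fibre of $p$ over $c_0(0)$ has $d$ points and each of them determines, after closing up under the deck action, at most one prime geodesic upstairs whose projection is an iterate of $c_0$. Hence $N_M(t)\ge N_{\widetilde M}(t)/d$, and substituting $R=t/(kL_0)$ gives $\liminf_{t\to\infty}N(t)/t^k>0$. I expect the main subtlety to be this projection step — verifying carefully that primality passes through $p$ correctly and that the covering multiplicity does not inflate the fibres — since everything else reduces to the abelian count carried out in the cover.
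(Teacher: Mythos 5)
Your proof is correct and follows essentially the same strategy as the paper: pass to a finite cover $\widetilde M$ with $b_1(\widetilde M)\ge k$, produce on the order of $t^k$ geometrically distinct closed geodesics there, and observe that projection to $M$ is at most $d$-to-one on prime closed geodesics, where $d$ is the covering degree. The only difference is that the paper simply cites Bangert and Ballmann for the estimate $\widetilde N(t)\ge\widetilde\lambda_k\,t^k$ on the cover, whereas you reprove it from scratch via primitive vectors in the image of a surjection $\pi_1(\widetilde M)\twoheadrightarrow\Z^k$; your self-contained version of that step is sound.
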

%%%%%%%%%%%%%%%%%%%%%%%%%%%%%%%%%%%%%%%%%%%%%%%%%%%%%%%%%
\begin{proof}
Let $F: \widetilde{M}\longrightarrow M$ be a finite
covering of order $r \ge 2$ with $b_1(\widetilde{M})\ge k$
and let $\widetilde{g}$ be the induced Riemannian
metric on $\widetilde{M}$ for which the
finite group of deck transformations $\Gamma$ acts by
isometries. For a closed geodesic $\widetilde{c}:
\sone \longrightarrow \widetilde{M}$ on $\widetilde{M}$
the set $\Gamma (\widetilde{c}(\sone))
=\{\gamma (\widetilde{c}(t)); \gamma \in \Gamma, t\in
\sone\}$ consists of at most $r=\# \Gamma$ distinct closed
geodesics which are mapped under $F$ onto a single
closed geodesic of length
$\le L(\widetilde{c}).$ Since $b_1(\widetilde{M})\ge k$
we obtain for the number
$\widetilde{N}(t)$ of closed geodesics on $\widetilde{M}$
of length $\le t:$
$\widetilde{N}(t)\ge \widetilde{\lambda}_k \, t^k$
for some $\widetilde{\lambda}_k>0,$
cf. \cite[\S 5]{Ba} or \cite[p.55]{Ball}.
Hence the number $N(t)$ of closed geodesics of length
$\le t$ on $M$ satisfies:
$N_g(t)\ge (\widetilde{\lambda}_k/r)\, t^k.$
\end{proof}
%%%%%%%%%%%%%%%%%%%%%%%%%%%%%%%%%%%%%%%%%%%%%%%%%%%%%%%%%%
%%%%%%%%%%%%%%%%%%%%%%%%%%%%%%%%%%%%%%%%%%%%%%%%%%%%%%%%%%%
%%%%%%%%%%%%%%%%%%%%%%%%%%%%%%%%%%%%%%%%%%%%%%%%%%%%%%%%%%%%
\begin{theorem}
\label{thm:three-dim-cases}
Let $M$ be a compact, orientable $3$-manifold
with infinite fundamental group $\pi_1(M)$ and
with prime decomposition given in
Proposition~\ref{pro:decomposition}.
Let $M$ carry a Riemannian or Finsler metric. Then we
obtain the following estimates for the
number $N(t)$ of geometrically distinct closed geodesics
of length $\le t:$
%%%%%%%%%%%%%%%%%%%%%%%%%%%%%%%%%%%%%%%%%%%%%%%%%%%
\begin{itemize}
%%%%%%%%%%%%%%%%%%%%%%%%%%%%%%%%%%%%%%%%%%%%%%%%%%%
\item[(a)] If the manifold is not prime
and is not diffeomorphic to $\R P^3\#\R P^3$
%or if the manifold is hyperbolic,
then
 $\liminf_{t\to \infty} \log(N(t))/t>0\,.$
%%%%%%%%%%%%%%%%%%%%%%%%%%%%%%%%%%%%%%%%%%%%%%%%%%%
\item[(b)] If the manifold is prime and if the
fundamental group is neither finite nor solvable then
for any $r\ge 1:$
%%%%%%%%%%%%%%%%%%%%%%%%%%%%%%%%%%%%%%%%%%%%%%%%%%%
 $\liminf_{t\to \infty} N(t)/t^r>0\,.$
%%%%%%%%%%%%%%%%%%%%%%%%%%%%%%%%%%%%%%%%%%%%%%%
%%%%%%%%%%%%%%%%%%%%%%%%%%%%%%%%%%%%%%%%%%%%%%%%%%%%%%
\item[(c)] If
$M$ is prime and has an infinite solvable
fundamental group or if
$M= \R P^3 \# \R P^3$
then
$\liminf_{t\to \infty} N(t)\log(t)/t>0\,.$
%%%%%%%%%%%%%%%%%%%%%%%%%%%%%%%%%%%%%%%%%%%%%%%%%%%%%%
%%%%%%%%%%%%%%%%%%%%%%%%%%%%%%%%%%%%%%%%%%%%%%%%%%%%%%
\end{itemize}
\end{theorem}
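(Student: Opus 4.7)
The plan is to combine the prime decomposition of Proposition~\ref{pro:decomposition} with the two main tools already at hand: Theorem~\ref{thm:conn-intro} for the non-prime cases and Proposition~\ref{pro:virtual-number} (together with Proposition~\ref{pro:infinite-virtual}) for the prime ones, with the residual situations absorbed by the Bangert--Hingston and Taimanov estimates cited in the introduction.

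For part~(a), I would first note that by the Poincar\'e conjecture every prime orientable $3$-manifold different from $S^3$ has non-trivial fundamental group, so each prime summand in~\eqref{eq:decomposition} has $\pi_1\neq 0$. The aim is to exhibit a (not necessarily prime) decomposition $M=M_1\# M_2$ with $\pi_1(M_1)\neq 0$, $\pi_1(M_2)\neq 0$, and $\pi_1(M_2)\not\cong\z_2$, so that Theorem~\ref{thm:conn-intro}(b) applies and yields exponential growth. If some prime summand has fundamental group different from $\z_2$, take it as $M_2$ and group the rest as $M_1$. Otherwise every summand is $\R P^3$ (the unique orientable prime $3$-manifold with $\pi_1\cong\z_2$), and by hypothesis the number of summands is at least $3$; grouping two of them as $M_2$ produces $\pi_1(M_2)\cong\z_2\ast\z_2$, which is infinite and hence not isomorphic to $\z_2$.

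For part~(b), I would first observe that a prime orientable $3$-manifold which is not irreducible is diffeomorphic to $S^1\times S^2$, whose fundamental group $\z$ is solvable; the hypothesis therefore forces $M$ to be irreducible. Proposition~\ref{pro:infinite-virtual} then gives $vb_1(M)=\infty$, and for any $r\ge 1$ I would apply Proposition~\ref{pro:virtual-number} with any integer $k\ge\max(2,\lceil r\rceil)$ to obtain $\liminf_{t\to\infty}N(t)/t^k>0$, which implies the desired polynomial lower bound.

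For part~(c), I would split into two sub-cases. The case $M=\R P^3\#\R P^3$ is handled directly by Theorem~\ref{thm:conn-intro}(a) with $M_1=M_2=\R P^3$, since $\pi_1(M_2)\cong\z_2\neq 0$. If instead $M$ is prime with infinite solvable fundamental group, then either $M\cong S^1\times S^2$ and Bangert--Hingston~\cite[Thm.]{BH} applies directly to $\pi_1(M)\cong\z$, or by Proposition~\ref{pro:decomposition}(b) some finite cover $\widetilde M\to M$ of degree $d$ is a torus bundle with infinite solvable fundamental group, and Taimanov's estimate~\cite[Thm.~3]{T1993} supplies the required rate on $\widetilde M$; this descends to $M$ via the standard observation that each prime closed geodesic on $M$ of length $\le t$ has at most $d$ distinct prime lifts on $\widetilde M$, so the growth rate $N(t)\log(t)/t$ loses at most the multiplicative constant $1/d$. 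The main obstacle I anticipate is precisely this torus-bundle sub-case when $b_1(\widetilde M)=1$: Proposition~\ref{pro:virtual-number} is unavailable, Ballmann's theorem does not apply because $\pi_1(\widetilde M)$ is polycyclic but not almost nilpotent in general, and Bangert--Hingston cannot be used directly since $\pi_1(\widetilde M)\not\cong\z$, so one is forced to invoke Taimanov's solvable estimate and to verify carefully that the rate $t/\log t$ survives the passage through the finite covering.
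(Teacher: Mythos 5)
Your proposal is correct and follows essentially the same route as the paper: Theorem~\ref{thm:conn-intro}(b) with a grouping of the prime summands for (a) (your explicit case analysis producing $\pi_1(M_2)\not\cong\z_2$ is exactly what the paper's one-line reduction to a free product ``with one factor of at least three elements'' amounts to), Propositions~\ref{pro:infinite-virtual} and~\ref{pro:virtual-number} for (b), and Bangert--Hingston resp.\ Taimanov for (c). The only superfluous step is in the prime solvable sub-case of (c): the obstacle you anticipate does not arise, since Taimanov's theorem \cite[Thm.~3]{T1993} assumes only that $\pi_1(M)$ is infinite and solvable and therefore applies to $M$ directly, with no need to pass to a torus-bundle cover and descend.
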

%%%%%%%%%%%%%%%%%%%%%%%%%%%%%%%%%%%%%%%%%%%%%%%%%%%%%%%%%%%%
%%%%%%%%%%%%%%%%%%%%%%%%%%%%%%%%%%%%%%%%%%%%%%%%%%%%%%%%%%%%
%%%%%%%%%%%%%%%%%%%%%%%%%%%%%%%%%%%%%%%%%%%%%%%%%%%%%%%%%%%%
\begin{proof}
(a)
If the manifold is not prime and not diffeomorphic to
$\R P^3 \# \R P^3$ then
the fundamental group $\pi_1(M)$
is isomorphic to the free product $G_1\ast G_2$ of
two non-trivial groups $G_1, G_2$ where at least one of the groups
has at least three elements. Then the claim follows from
Theorem~\ref{thm:conn-intro}.

\smallskip

(b) If the manifold is prime and if
$\pi_1(M)$ is neither finite nor solvable we conclude from
Proposition~\ref{pro:infinite-virtual}:
 $vb_1(M)=\infty\,.$
Then the claim follows from
Proposition~\ref{pro:virtual-number}.

\smallskip

(c)
If the manifold is prime and the
fundamental group $\pi_1(M)=\pi_1(T_1)$
is infinite and
solvable the conclusion is due to
Taimanov~\cite[Thm.3]{T1993}.

Let $M=\R P^3 \# \R P^3$ i.e. $M$ is the connected sum
of two real projective $3$-spaces. Then
$\widetilde{M}=S^1 \times S^2$ is a twofold cover,
its fundamental group
$\pi_1(\widetilde{M})\cong \z $
is a normal subgroup of index $2$ of the free product
$\z_2 \ast \z_2,$ cf. the Proof of
Theorem~\ref{thm:conn-intro}.
Since $\pi_1(\widetilde{M})\cong \z $
the claim follows from
a result by Bangert-Hingston~\cite[Thm.]{BH},
cf. Theorem~\ref{thm:conn-intro} (a).
\end{proof}
%%%%%%%%%%%%%%%%%%%%%%%%%%%%%%%%%%%%%%%%%%%%%%%%%%%%%%%%%%
\begin{remark}
\label{rem:two}
\rm
If the $3$-manifold $M$ carries a hyperbolic metric then
for any Riemannian metric $g$ on $M$ the number $N(t)$ of
geometrically distinct closed geodesics with length $\le t$
satisfies
%\begin{equation*}
$ \liminf_{t\to \infty}\log(N(t))/t>0\,,$
%\end{equation*}
cf. for example~\cite[Thm.A]{Ka}.
\end{remark}
%%%%%%%%%%%%%%%%%%%%%%%%%%%%%%%%%%%%%%%%%%%%%%%%%%%%%%%%%
As a consequence of Theorem~\ref{thm:three-dim-cases}
we obtain
the {\bf Proof of Theorem~\ref{thm:three-dimension}} stated
in the Introduction:
\begin{proof} We can assume
without loss of generality that
$M$ is orientable. If $M$ is non-orientable we
use the twofold orientation covering $\widetilde{M}.$
If $\pi_1(M)$ is infinite also $\pi_1(\widetilde{M})$
is infinite.
Then the claim follows from Theorem~\ref{thm:three-dim-cases}.
\end{proof}
%%%%%%%%%%%%%%%%%%%%%%%%%%%%%%%%%%%%%%%%%%%%%%%%%%%%%%%
To include also the case of a finite and possibly trivial
fundamental group in dimension three we have to restrict
to a generic metric:
We call a Riemannian metric $g$ {\em twist metric} if either
all closed geodesics are hyperbolic or if there
exists one closed geodesic
of twist type satisfying the
assumptions of the Birkhoff-Lewis fixed point theorem.
For Riemannian metrics this
is a $C^k$-generic condition for $k\ge 4,$ cf.
\cite{KT} or \cite[(3.11)]{Ba}.

Then we obtain the
{\bf Proof of  Theorem~\ref{thm:three-dimensionA}}
stated in the Introduction:
%%%%%%%%%%%%%%%%%%%%%%%%%%%%%%%%%%%%%%%%%%%%%%%%%%%%%%%
\begin{proof}
Because of Theorem~\ref{thm:three-dimension}
we only have to consider the case of a finite
trivial fundamental group including the case
of a trivial fundamental group. Hence the statement
follows from the corresponding statement for
the universal covering space which is the
$3$-sphere by the
{\em Elliptization Theorem}~\cite[Th.1.7.3]{AFW}.
If there exists a closed geodesic of twist type
then the proof of the Birkhoff-Lewis fixed
point theorem shows that the function $N(t)$ satisfies
$\liminf_{t\to \infty}N(t) \log(t)/t >0.$
The same estimate also holds for a Riemannian metric
on $S^3$ all of whose closed geodesics are hyperbolic.
This was shown by Hingston~\cite[Thm.6.2]{Hi}.
\end{proof}
%%%%%%%%%%%%%%%%%%%%%%%%%%%%%%%%%%%%%%%%%%%%%%%%%%%%%%%%%%%
\begin{remark}
\label{rem:surface}
\rm
For a closed surface, i.e. a compact
$2$-manifold
the number $N(t)$ of closed geodesics
of an arbitrary Riemannian metric satisfies
$\liminf_{t\to \infty}N(t) \log(t)/t>0.$
If the fundamental group of a closed surface
is infinite then the function
$N(t)$ grows at least quadratically since the
its first virtual Betti number is at least $2,$
cf. Proposition~\ref{pro:virtual-number}.
This also holds for any Finsler metric.
For a Riemannian metric on $S^2$ the function $N(t)$
grows like the prime numbers, cf. \cite[Thm.]{Hi1993}.
But note that there are non-reversible Finsler metrics on
$S^2$ with only two closed geodesics, the
geometry of these examples
first introduced by Katok is discussed by
Ziller in~\cite{Zi}.
\end{remark}
%%%%%%%%%%%%%%%%%%%%%%%%%%%%%%%%%%%%%%%%%%%%%%%%%%%%%%%%%%%%%
%%%%%%%%%%%%%%%%%%%%%%%%%%%%%%%%%%%%%%%%%%%%%%%%%%%%%%%%%%%%%

\medskip
\end{document}